\documentclass[12pt,leqno]{amsart}
\usepackage{amsmath,amsfonts,amssymb,cite}
\usepackage[left=1 in,top=1 in,right=1 in,bottom=1 in]{geometry}



\renewcommand{\div}{\mbox{div}\,}
\renewcommand{\div}{\mbox{div}\,}

\newcommand{\R}{{\mathbb R}} \newcommand{\K}{{\mathbb K}}

\newcommand{\A}{\mathbf{A}}

\newcommand{\ba}{\mathbf{a}}

\newcommand\norm[1]{\left\| #1\right\|}

\def\longequals{\mathbin{=\kern-2pt=}}
\def\eqdef{\mathbin{\buildrel \rm def \over \longequals}}

\newtheorem{theorem}{Theorem}[section]

\newtheorem{remark}[theorem]{Remark}
\newtheorem{lemma}[theorem]{Lemma}
\newtheorem{proposition}[theorem]{Proposition}

\numberwithin{equation}{section}

\newcommand{\beq}{\begin{equation}}
\newcommand{\eeq}{\end{equation}}
\newcommand{\beqs}{\begin{equation*}}
\newcommand{\eeqs}{\end{equation*}}

\def\longequals{\mathbin{=\kern-2pt=}}
\def\eqdef{\stackrel{\rm def}{=}}

\begin{document}
\title[Local  gradient estimates]{Local  gradient estimates   for degenerate elliptic equations}

\author[L. Hoang]{Luan Hoang$^{\dag}$}
\address{$^\dag$ Department of Mathematics and Statistics, Texas Tech University, Box 41042, Lubbock, TX 79409--1042, U.S.A.}
\email{luan.hoang@ttu.edu}

\author[T. Nguyen]{Truyen  Nguyen$^\ddag$} 
\address{$^\ddag$Department of Mathematics, University of Akron, 302 Buchtel Common, Akron, OH 44325--4002, U.S.A}
\email{tnguyen@uakron.edu}

\author[T. Phan]{Tuoc  Phan$^{\dag\dag}$}
\address{$^{\dag\dag}$Department of Mathematics, University of Tennessee, Knoxville, 227 Ayress Hall, 1403 Circle Drive, Knoxville, TN 37996, U.S.A. }
\email{phan@math.utk.edu}

\date{\today}

\begin{abstract} 
This paper is focused on the local interior $W^{1,\infty}$-regularity for  weak solutions of  degenerate elliptic equations of the form $\text{div}[\ba(x,u, \nabla u)] +b(x, u, \nabla u) =0$, which 
include those of $p$-Laplacian type. We derive an explicit estimate of the local \textup{$L^\infty$}-norm for the solution's gradient in terms of its local $L^p$-norm.  
Specifically, we prove
\beqs
\norm{\nabla u}_{L^\infty(B_{\frac{R}{2}}(x_0))}^p \leq \frac{C}{|B_R(x_0)|}\int_{B_R(x_0)}|\nabla u(x)|^p dx.
\eeqs
This estimate   paves the way for our forthcoming work \cite{HNP2}  in  establishing   $W^{1,q}$-estimates (for $q>p$) for weak solutions to  a much larger class of quasilinear elliptic equations.

\end{abstract}

\maketitle

\section{Introduction}
Consider the Euclidean space $\R^n$ with integer $n\geq 1$.
Denote $B_R(x) =\{y\in \R^n:\, |y-x|<R\}$ and $B_R = B_R(0)$.
In this paper we investigate local gradient estimates 
for weak solutions to equations of divergence form
\begin{equation} \label{ref-eqn}
\text{div}[\ba(x,u, \nabla u)] +b(x, u, \nabla u) =0 \quad \text{in} \quad B_3,
\end{equation}
where the vector field $\ba$ and the function $b$ satisfy certain ellipticity and growth conditions. Specifically, let $\K \subset \mathbb{R}$ be an interval, 
and let $\ba =(\ba^1,\dots, \ba^n) : B_3 \times \K \times \mathbb{R}^n \rightarrow 
\mathbb{R}^n$  and 
 $b: B_3 \times \K \times \mathbb{R}^n \rightarrow 
\mathbb{R}$ be  Carath\'eodory maps such that $\ba$ is  differentiable on $B_3\times \K \times (\mathbb{R}^n \setminus\{0\})$.
We assume also that
\beq\tag{H1}
\ba (x,z,0)=0 \qquad\qquad \qquad\qquad \qquad\qquad \qquad\qquad 
\forall \ (x, z) \in B_3 \times \K;
\eeq
and there exist $p>1$ and $\gamma_0,\gamma_1>0$ such that
\beq\tag{H2}
\sum_{i, k =1}^n \frac{\partial \ba^i(x,z,\eta)}{\partial \eta_k} \xi_i \xi_k \geq \gamma_0|\eta|^{p-2}|\xi|^2 \qquad\quad 
\forall \ (x, z,\eta,\xi) \in B_3 \times \K\times (\mathbb{R}^n\setminus \{0\})\times \R^n;
\eeq
\beq\tag{H3}
\sum_{k=1}^n \big| \frac{\partial \ba(x, z, \eta)}{\partial \eta_k} \big| \leq \gamma_1 |\eta|^{p-2} \qquad\qquad\qquad\qquad\quad \forall \ (x, z,\eta) \in B_3\times \K\times( \mathbb{R}^n\setminus \{0\});
\eeq
\beq\tag{H4} 
\sum_{i=1}^n\big|\frac{\partial \ba}{\partial x_i}(x,z,\eta)\big |  +
|\eta| \big |\frac{\partial \ba}{\partial z}(x,z,\eta)\big | \leq \gamma_1(|\eta|^{p-1}+|\eta|^p) \quad\forall  (x, z,\eta) \in B_3 \times \K\times\mathbb{R}^n;
\eeq
\beq\tag{H5} 
 |b (x,z,\eta)|  \leq \gamma_1(|\eta|^{p-1}+|\eta|^p) \qquad\qquad\qquad\qquad \qquad\qquad \forall  (x, z,\eta) \in B_3 \times \K\times \mathbb{R}^n.
\eeq
We would like to stress that \textup{(H1)}--\textup{(H5)} are  only assumed to hold for $z\in \K$ which might be a strict subset of $\R$, and the constants $\gamma_0, \gamma_1$ can  depend on $\K$. 
For example, in some cross-diffusion equations in population dynamics  (see \cite{HNP1} and the references therein), we have $p=2$, $\ba(x,z,\eta) = (1 +z) \eta$, and  $\K$ is  a bounded subset of $(0, \infty)$.   

A weak solution $u(x)$ of \eqref{ref-eqn} is defined to be a function in $W^{1,p}_{\text{loc}}(B_3)$  that satisfies $u(x)\in \K $ for a.e. $x\in B_3$, 
and
\beqs
-\int_{B_3} \ba (x,u,\nabla u) \cdot \nabla \varphi(x) \, dx + \int_{B_3} b(x,u,\nabla u) \varphi(x) \, dx= 0\quad \forall \varphi \in W_0^{1,p}(B_3)\cap L^\infty(B_3).
 \eeqs

The equations of the form \eqref{ref-eqn}  have been studied extensively in the literature, see  
\cite{D, E1, La, Le, M, Serrin, Tol1, Tol2, Uh, Ur}. In particular, interior $C^{1,\alpha}$ regularity for homogeneous $p$-Laplace  equations  was established by Uraltceva \cite{Ur}, Uhlenbeck \cite{Uh}, Evans \cite{E1} and Lewis \cite{Le}.  Regarding the local regularity for  general quasilinear equations \eqref{ref-eqn}, the following  classical result is proved by DiBenedetto \cite{D} and Tolksdorf\cite{Tol2}.
\begin{theorem}\label{2p-Linf}{\rm(\cite[Theorem~1]{D},\cite[Theorem~1]{Tol2})}   Assume 
\textup{(H1)--(H3)}, and
\beq\tag{H4$'$} 
\sum_{i=1}^n \big |\frac{\partial \ba}{\partial x_i}(x,z,\eta)\big |  +
\big |\frac{\partial \ba}{\partial z}(x,z,\eta)\big | \leq \gamma_1|\eta|^{p-1},
\eeq
\beq\tag{H5$'$} 
 \left |b(x,z,\eta)\right | \leq \gamma_1 |\eta|^p 
 \eeq
 hold for every $ (x, z,\eta) \in B_3 \times \K\times \mathbb{R}^n$. If $u$ is a bounded weak solution of \eqref{ref-eqn}, then  $u\in C^{1,\alpha}_{loc}(B_3)$ and there exists a constant $M>0$ depending only on $n, p, \gamma_0,\gamma_1$ and $\|u\|_{L^\infty (B_3)}$ such that 
\begin{equation}\label{rough}
\norm{\nabla u}_{L^\infty(B_2)} \leq M.
\end{equation}
\end{theorem}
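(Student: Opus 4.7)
The plan is to follow the classical approach of DiBenedetto and Tolksdorf: regularize the degenerate equation, differentiate it to obtain a linear equation for each partial derivative of $u$, derive a Caccioppoli-type inequality for $W := (\e + \abs{\nabla u_\e}^2)^{1/2}$, and run a Moser iteration to pass from $L^p$ to $L^\infty$ control of $W$. First, replace $\ba(x,z,\eta)$ by $\ba_\e$ in which $(\abs{\eta}^2 + \e)^{(p-2)/2}$ takes the place of $\abs{\eta}^{p-2}$; the resulting field is uniformly elliptic on bounded $\eta$-sets, so standard quasilinear theory produces a smooth approximate solution $u_\e$ on a slightly smaller ball, say $B_{5/2}$, with $u_\e = u$ on the boundary and $u_\e \to u$ as $\e \to 0$. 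Testing the equation with $\zeta^p u_\e$ and invoking \textup{(H5$'$)} together with $\norm{u}_{L^\infty(B_3)} < \infty$ gives a uniform base estimate $\norm{\nabla u_\e}_{L^p(B_{5/2})} \leq C$.

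Next, fix a direction $k$, differentiate the regularized equation in $x_k$, and set $v^k := \partial_{x_k} u_\e$; the result is a divergence-form linear equation for $v^k$ with leading coefficients $(\partial \ba_\e^i/\partial \eta_j)(x,u_\e,\nabla u_\e)$. Test this with $\varphi = \zeta^2 v^k W^{q-2}$ for $q \geq p$, sum over $k$, and use \textup{(H2)} to extract a coercive lower bound of order $\gamma_0 W^{p-2}\abs{\nabla^2 u_\e}^2$. The remaining terms, arising from the $x$- and $z$-dependence of $\ba$ and from $b$, can be absorbed using \textup{(H3)}, \textup{(H4$'$)}, \textup{(H5$'$)} and Young's inequality, at the price of a multiplicative constant depending on $\norm{u_\e}_{L^\infty}$. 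This gives a Caccioppoli-type inequality
\beqs
\int_{B_3} \zeta^2 W^{p+q-4}\abs{\nabla W}^2\,dx \leq C(q,\norm{u}_{L^\infty(B_3)}) \int_{B_3} \big(\abs{\nabla \zeta}^2 + \zeta^2\big) W^{p+q-2}\,dx.
\eeqs

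Applying the Sobolev inequality to $\zeta W^{(p+q-2)/2}$ turns this into a reverse-H\"older estimate
\beqs
\Big(\frac{1}{\abs{B_r}}\int_{B_r} W^{\chi(p+q-2)}\,dx\Big)^{1/\chi} \leq \frac{C(q)}{(R-r)^2}\,\frac{1}{\abs{B_R}}\int_{B_R} W^{p+q-2}\,dx
\eeqs
with some $\chi = \chi(n) > 1$. Iterating on a geometric sequence of radii shrinking from $5/2$ to $2$ and exponents $q_j \to \infty$ yields $\norm{W}_{L^\infty(B_2)} \leq C\norm{W}_{L^p(B_{5/2})}$, which together with the base estimate gives a uniform-in-$\e$ bound $\norm{\nabla u_\e}_{L^\infty(B_2)} \leq M$. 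Passing to the limit $\e \to 0$ produces \eqref{rough}, and the $C^{1,\alpha}_{\mathrm{loc}}$ conclusion follows because the equation is effectively uniformly elliptic on $\{\abs{\nabla u} \leq M\}$, where Ural'tseva--Uhlenbeck type interior estimates apply.

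The main obstacle is the Caccioppoli step in the presence of the full $\abs{\eta}^p$-growth permitted by \textup{(H5$'$)}. The $\abs{\eta}^{p-1}$-terms coming from \textup{(H4$'$)} are subcritical and yield easily to Young's inequality against $W^{p-2}\abs{\nabla^2 u_\e}^2$; the $\abs{\eta}^p$-contribution from $b$, by contrast, is exactly critical in scaling and cannot be absorbed without using the $L^\infty$-bound on $u$ in an essential way --- for instance, via an integration by parts that trades one factor of $\abs{\nabla u_\e}$ for a factor of $u_\e$. This is precisely the reason the final constant $M$ must depend on $\norm{u}_{L^\infty(B_3)}$.
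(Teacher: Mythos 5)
The paper does not supply a proof of this theorem; it is cited from DiBenedetto and Tolksdorf. Your outline matches the classical skeleton, but two of your intermediate claims are false as written, and fixing them requires exactly the tools the paper itself deploys when it proves the sharper Theorem~\ref{Lip}. First, testing with $\zeta^p u_\e$ does \emph{not} deliver the $L^p$ base estimate under (H5$'$): the term $\int b\,\zeta^p u_\e\,dx$ is controlled only by $\gamma_1\|u\|_{L^\infty}\int|\nabla u_\e|^p\zeta^p\,dx$, which has the same power $|\nabla u_\e|^p$ as the coercive left-hand side $\gamma_0\int|\nabla u_\e|^p\zeta^p\,dx$; unless $\gamma_1\|u\|_{L^\infty}<\gamma_0$, nothing can be absorbed. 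The standard device (the paper's Lemma~\ref{Lp-est}, following Ladyzhenskaya--Uraltceva) is to test instead with $e^{\lambda u_\e}\zeta^p$ and choose $\lambda$ large enough that $\lambda\gamma_0-\gamma_1>0$, so that the extra coercive contribution from differentiating the exponential dominates the $\gamma_1|\eta|^p$ growth of $b$.

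Second, the Caccioppoli inequality you state cannot hold. After differentiating the equation and testing with $\zeta^2 v^k W^{q-2}$, the $b$-contribution is handled by moving the derivative onto the test function, $\int\partial_{x_k}b\,\varphi=-\int b\,\varphi_{x_k}$; since $|b|\lesssim W^p$ and $\varphi_{x_k}$ contains a term of size $\zeta^2 W^{q-2}|\nabla^2 u_\e|$, Young's inequality leaves a term of order $\int\zeta^2 W^{p+q}\,dx$ on the right --- a full extra power of $W^2$ beyond the $W^{p+q-2}$ you wrote, and no multiplicative constant $C(q,\|u\|_\infty)$ can close that gap. You do sense this in your last paragraph and point to an integration by parts that trades a factor of $|\nabla u_\e|$ for $u_\e$, but this device (the paper's Lemma~\ref{embed-iq}) in fact produces a factor of $(\text{osc}_{B_\rho}u_\e)^2$ multiplying the coercive term, and for absorption into the left-hand side that factor must be \emph{small}, not merely finite. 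That smallness requires the interior H\"older continuity of $u$ (the paper's Theorem~\ref{thm:Holder}), giving $\text{osc}_{B_\rho}u\leq C_0\rho^\alpha$, and then the Moser exponent $q$ and the radius $\rho$ must be coupled --- as in the paper's proof of Theorem~\ref{Lip}, with $R_s\sim(1+s)^{-1/\alpha}$ --- so that the absorbed factor remains below $1/2$ at every iteration step. With an $L^\infty$ bound alone, $\text{osc}\,u$ on a fixed ball is of order $\|u\|_\infty$ and the iteration does not close; your claimed reverse-H\"older inequality is therefore unavailable as stated.
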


Our purpose is to explicate estimate \eqref{rough}, namely, to bound the local $L^\infty$-norm of $|\nabla u|$ by its local $L^p$-norm that preserves the scaling in $x$. Our achieved result holds for  more general vector field $\ba(x, u,\nabla u)$ and function  $b(x, u,\nabla u)$  than the ones required in Theorem~\ref{2p-Linf}.
Precisely, we obtain: 
\begin{theorem}\label{Lip} Assume that \textup{(H1)--(H5)} hold.
Let $u$ be a  weak solution of \eqref{ref-eqn} that satisfies 
\beq\label{rough-ass}
\|u\|_{L^\infty(B_{\frac{11}{4}})} \leq M_0.
\eeq
Then there exists $C>0$ depending only on $n$, $p$,  $\gamma_0$, $\gamma_1$  and $M_0$ such that
\beq\label{Win}
\norm{\nabla u}_{L^\infty(B_{\frac{R}{2}}(x_0))}^p \leq \frac{C}{|B_R(x_0)|}\int_{B_R(x_0)}|\nabla u(x)|^p dx, \quad 
\forall x_0\in B_1,\  0 < R \leq 1.
\eeq
\end{theorem}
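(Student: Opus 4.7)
First, for $x_0\in B_1$ and $0<R\le 1$, I would set $v(y)=u(x_0+Ry)$ for $y\in B_1$ and verify that $v$ is a weak solution in $B_1$ of an equation of the same form with rescaled data $\tilde{\ba}(y,z,\zeta):=R^{p-1}\ba(x_0+Ry,z,R^{-1}\zeta)$ and $\tilde b(y,z,\zeta):=R^{p}\,b(x_0+Ry,z,R^{-1}\zeta)$. Because $R\le 1$, a direct calculation shows that the hypotheses \textup{(H1)--(H5)} are preserved for $(\tilde\ba,\tilde b)$ with the \emph{same} constants $\gamma_0,\gamma_1$, and $\|v\|_{L^\infty(B_1)}\le M_0$. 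Since both sides of \eqref{Win} transform identically under this rescaling, the task reduces to proving
\beqs
\|\nabla u\|_{L^\infty(B_{1/2})}^p\le C\int_{B_1}|\nabla u|^p\,dx
\eeqs
for weak solutions of \eqref{ref-eqn} in $B_1$ satisfying $\|u\|_{L^\infty(B_1)}\le M_0$.

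Next, to justify the formal manipulations below, I would approximate by smooth solutions of a uniformly elliptic regularization---e.g.\ replace $\ba(x,z,\eta)$ by $\ba(x,z,\eta)+\e(\e^2+|\eta|^2)^{(p-2)/2}\eta$ and $b$ by a bounded analogue---using Theorem~\ref{2p-Linf} to secure compactness before passing $\e\to 0$. Differentiating \eqref{ref-eqn} in $x_\ell$ then yields the linear equation
\beqs
\sum_{i,k}\partial_i\big[A_{ik}\,\partial_k\partial_\ell u\big]=-\sum_i\partial_i\big[\partial_{x_\ell}\ba^i+\partial_z\ba^i\cdot\partial_\ell u\big]-\partial_\ell b,
\eeqs
with $A_{ik}(x):=\partial_{\eta_k}\ba^i(x,u,\nabla u)$ satisfying the degenerate-elliptic bounds $\gamma_0|\nabla u|^{p-2}|\xi|^2\le A_{ik}\xi_i\xi_k\le\gamma_1|\nabla u|^{p-2}|\xi|^2$ from \textup{(H2)--(H3)}, and right-hand side controlled by $|\nabla u|^{p-1}+|\nabla u|^p$ through \textup{(H4)--(H5)}.

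I would then test the differentiated equation against $\partial_\ell u\cdot G(|\nabla u|^2)\,\eta^2$ for a suitable $G$ (e.g.\ $G(s)=(s-k^2)_+^{\alpha}$ or $G(s)=s^{\alpha}$) and sum in $\ell$. After integration by parts and Young's inequality, this produces a Caccioppoli-type inequality between concentric balls for a truncated power of $|\nabla u|$, in which the $|\nabla u|^p$ contributions coming from \textup{(H4)--(H5)} are absorbed using $\|u\|_{L^\infty}\le M_0$. Combining the Caccioppoli step with the Sobolev embedding and iterating along a geometric chain of radii from $B_1$ down to $B_{1/2}$---either by Moser iteration in the exponent $\alpha\to\infty$, or by De~Giorgi iteration on levels $k_j\uparrow\|\nabla u\|_\infty$---should yield $\|\nabla u\|_{L^\infty(B_{1/2})}^p\le C\int_{B_1}|\nabla u|^p\,dx$, which together with the first step gives \eqref{Win}.

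The hard part will be the natural-growth terms $|\nabla u|^p$ in \textup{(H4)--(H5)}: they sit precisely at the critical scale of the energy $\int|\nabla u|^{p-2}|\nabla\partial_\ell u|^2$ and cannot be absorbed by ellipticity alone. Closing the estimate will require essential use of $\|u\|_{L^\infty}\le M_0$, e.g.\ through an exponential test weight $e^{\lambda u}$ with $\lambda=\lambda(M_0)$ chosen large enough that the resulting negative contribution swallows the offending terms; carrying out this absorption uniformly across the iteration levels, and uniformly in the regularization parameter $\e$, is the principal technical burden, and is the reason the final constant $C$ in \eqref{Win} must depend on $M_0$.
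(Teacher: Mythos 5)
Your rescaling reduction is correct (and is a cleaner normalization than the paper uses, which instead tracks $R$ and $\sigma$ through the iteration), and the broad skeleton—differentiate the equation, test with $u_{x_i}\beta(w)\xi^2$, derive Caccioppoli inequalities, iterate—matches the paper's. You also correctly pinpoint where the difficulty lies: the $|\nabla u|^p$-growth in (H4)--(H5) produces supercritical terms $\int w^{\frac{p+2}{2}}\beta(w)\xi^2$ in the differentiated energy estimate (see \eqref{Iest}), and these sit at the critical scale of the iteration. The gap is that the mechanism you propose to absorb them—an exponential weight $e^{\lambda u}$ with $\lambda=\lambda(M_0)$—does not work at this stage. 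For the \emph{undifferentiated} equation it does: pairing $\nabla(e^{\lambda u}\xi^p)$ with $\ba\cdot\nabla u\gtrsim|\nabla u|^p$ gives the coercive term $\lambda\gamma_0 e^{\lambda u}|\nabla u|^p\xi^p$, which is exactly how the paper's Lemma~\ref{Lp-est} controls $\int|\nabla u|^p$. But if you insert $e^{\lambda u}$ into the differentiated test function $\varphi_i = u_{x_i}\beta(w)e^{\lambda u}\xi^2$, the new contribution from $\nabla(e^{\lambda u})$ on the principal side is $\tfrac{\lambda}{2}\,e^{\lambda u}\beta(w)\xi^2\sum_{j,k}\frac{\partial\ba^k}{\partial\eta_j}u_{x_k}w_{x_j}$, which involves $\nabla w$ and is not sign-definite; it produces no coercive $w^{\frac{p+2}{2}}$ piece and cannot swallow the offending terms. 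So ``closing the estimate'' is not merely a technical burden—your proposed device does not supply the missing cancellation.

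The paper closes the gap by a different route, and all three of its ingredients are absent from your sketch. First, \eqref{rough-ass} together with (H1)--(H3), (H5) gives interior H\"older continuity of $u$ (Theorem~\ref{thm:Holder}), so $\text{osc}_{B_\rho}u\le C_0\rho^\alpha$. Second, a purely analytic interpolation inequality (Lemma~\ref{embed-iq}) bounds $\int w^{\frac{p+2+2s}{2}}\xi^2$ by $(\text{osc}\,u)^2$ times the left-hand side of the Caccioppoli inequality; choosing the support radius $\rho=R_s\sim(1+s)^{-1/\alpha}$ small makes $C\rho^{2\alpha}(1+s)^2\le\tfrac12$ and lets the supercritical term be absorbed. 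Iterating in $s$ over these shrinking balls gives higher integrability $\nabla u\in L^{p+2m}_{loc}$ for all $m$ with an explicit bound. That higher integrability is then fed, via H\"older's inequality in an auxiliary exponent $q>n/2$, into a De~Giorgi iteration on level sets of $v=|\nabla u|^p$ (Lemma~\ref{DiGiorgi-Class} and Proposition~\ref{prop:Lip}), producing a bound of $\|\nabla u\|_{L^\infty}$ by an $L^{2p}$ average of $\nabla u$; a final interpolation (Lemma~\ref{interpolation}) downgrades this to the homogeneous $L^p$ average appearing in \eqref{Win}. In short: the role of $\|u\|_{L^\infty}\le M_0$ is to trigger H\"older continuity of $u$ and hence small oscillation on small balls, not to power an exponential weight; without that step and the $L^{2p}\!\to\!L^p$ interpolation your iteration does not close.
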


When the growths of $\ba$ and $b$ in the $\eta$ variable are weaker, the assumption   
\eqref{rough-ass} on the local boundedness of the solution can be dropped. In particular, we obtain the following result when  conditions (H4) and (H5) are strengthened appropriately.

\begin{theorem} \label{Lip-plus} Assume \textup{(H2)--(H3)}, and
\begin{equation}\label{strengthened-cond}
\sum_{i=1}^n\big|\frac{\partial \ba}{\partial x_i}(x,z,\eta)\big|  +
|\eta| \big |\frac{\partial \ba}{\partial z}(x,z,\eta)\big |+|b (x,z,\eta)| \leq \gamma_1|\eta|^{p-1}, \quad \forall  (x, z,\eta) \in B_3 \times \K\times \mathbb{R}^n.
\end{equation}
Then there exists $C=C(n,p,\gamma_0,\gamma_1)>0$ such that for any weak solution  $u$  of \eqref{ref-eqn}, the estimate \eqref{Win} holds true.
\end{theorem}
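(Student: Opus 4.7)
The plan is to adapt a Moser-type iteration directly to the gradient $\nabla u$, exploiting the fact that under \eqref{strengthened-cond} every lower-order contribution is linear in the principal part and therefore needs no $L^\infty$ bound on $u$ to be absorbed. First I would regularize, replacing $\ba$ by $\ba_\e(x,z,\eta)=\ba(x,z,\eta)+\e(|\eta|^2+\e^2)^{(p-2)/2}\eta$ so that the corresponding solution is $C^{1,\alpha}_{loc}$ and may be differentiated classically, and derive the estimate uniformly in $\e$ before passing to the limit; the regularized problem still obeys \textup{(H2)--(H3)} and \eqref{strengthened-cond} with constants comparable to $\gamma_0,\gamma_1$. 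Next I would differentiate the regularized equation in $x_\ell$ to obtain a linear equation for $w=D_\ell u$ of the form $\sum_{i,k}D_i(A^{ik}D_k w)=F_\ell$, where $A^{ik}=\partial_{\eta_k}\ba^i(x,u,\nabla u)$ satisfies the weighted ellipticity and boundedness of \textup{(H2)--(H3)} and $F_\ell$ collects the explicit $x$- and $z$-derivatives of $\ba$ together with $D_\ell b$; under \eqref{strengthened-cond} each term in $F_\ell$ is dominated by $\gamma_1|\nabla u|^{p-1}$, with no contribution of order $|\nabla u|^p$.

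I would then test this linearized equation against $(|\nabla u|^2+\delta)^{\beta} w\,\zeta^2$ for a cutoff $\zeta\in C_c^\infty(B_R(x_0))$ and a parameter $\beta\geq 0$, sum over $\ell$, apply the Sobolev embedding, and iterate $\beta\uparrow\infty$ in the standard Moser fashion to obtain reverse-H\"older inequalities for $|\nabla u|^{p+2\beta}$ on nested balls whose radii decrease to $R/2$. The decisive point is that all the $|\nabla u|^{p-1}$ terms on the right-hand side can be absorbed by Young's inequality into the principal part $A^{ik}D_i w\,D_k w\gtrsim|\nabla u|^{p-2}|\nabla w|^2$ with no factor involving $u$ itself. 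This stands in contrast to the situation behind Theorem~\ref{Lip}, where the extra $|\nabla u|^p$ contributions permitted by the full \textup{(H4)--(H5)} have the same order of growth as the principal part and must be absorbed by testing against an exponential-in-$u$ multiplier $e^{\kappa u}$, thereby producing the $M_0$-dependence in the final constant.

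Sending $\delta,\e\to 0$ and undoing the regularization then yields \eqref{Win} with a constant depending only on $n,p,\gamma_0,\gamma_1$. The main obstacle is essentially a bookkeeping one: one must verify, step by step, that the removal of the $|\eta|^p$ terms in \eqref{strengthened-cond} allows every Young/H\"older absorption inside the Moser iteration to proceed without introducing any factor depending on $\|u\|_{L^\infty}$, and one must also confirm that no hidden use of a bound on $u$ creeps into the regularization or the limit passage.
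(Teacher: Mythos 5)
Your proposal is sound, but it takes a genuinely different iterative route than the paper. The paper derives Theorem~\ref{Lip-plus} as an immediate corollary of Proposition~\ref{prop:Lip}: Remark~\ref{rm:better-est} strips the supercritical $w^{p+1}$ term out of Lemma~\ref{DiGiorgi-Class} under \eqref{strengthened-cond} to produce the Caccioppoli-type estimate \eqref{strengthened-est} for truncations $(v-k)^+$ of $v=|\nabla u|^p$, this drives a De Giorgi iteration on level sets yielding $\|\nabla u\|_{L^\infty(B_{(1-\sigma)R})}\lesssim(\sigma R)^{-n/(2p)}\|\nabla u\|_{L^{2p}(B_R)}$, and finally the interpolation Lemma~\ref{interpolation} converts the $L^{2p}$ average into an $L^p$ average. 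You instead propose Moser iteration on $(|\nabla u|^2+\delta)^\beta D_\ell u\,\zeta^2$ with $\beta\uparrow\infty$, which reaches the $L^p$ average directly without truncation or a separate interpolation lemma. Both routes rest on exactly the observation you isolate — under \eqref{strengthened-cond} every lower-order term grows like $|\nabla u|^{p-1}$ and absorbs by Young into the coercive quantity $w^{(p-2)/2}|\nabla^2 u|^2$ with constants independent of $u$ — and both close with $C=C(n,p,\gamma_0,\gamma_1)$. The paper's choice of De Giorgi iteration has the advantage that Theorem~\ref{Lip-plus} falls out as a one-line modification of the machinery already built for Theorem~\ref{Lip}; your Moser route is more self-contained for this particular theorem. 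Two small points worth tidying: your additive regularization $\ba_\e=\ba+\e(|\eta|^2+\e^2)^{(p-2)/2}\eta$ preserves (H2)--(H3) with comparable constants only for $p\geq 2$, so for $1<p<2$ you should revert to the difference-quotient argument the paper cites in the proof of Lemma~\ref{test-lemma}; and your attribution of the $M_0$-dependence in Theorem~\ref{Lip} to an exponential-in-$u$ multiplier in the gradient iteration is slightly off — the weight $e^{\lambda u}$ appears only in Lemma~\ref{Lp-est} for the initial $L^p$ bound, while the iteration in Proposition~\ref{prop:Lip} controls $w^{p+1}$ via H\"older's inequality and the higher-integrability hypothesis \eqref{inter-rough-ass}, itself obtained from the H\"older continuity of $u$. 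Neither affects the correctness of your argument for Theorem~\ref{Lip-plus}.
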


Gradient estimates of the type 
 \eqref{Win}  were discovered by Uhlenbeck \cite{Uh} for    elliptic systems of the form $\div \big(\A(|\nabla u|^2)\cdot \nabla u \big)=0$, and were later extended further by Tolksdorf \cite{Tol1} for a larger class of quasilinear elliptic systems. In 
 \cite[Proposition~3.3]{D}, DiBenedetto derived estimate  \eqref{Win} for weak solutions to scalar equation $\div \ba(\nabla u)=0$. The same estimate was established in \cite[Lemma~1.1]{BDM} for equations of the form  $\div \big(|\nabla u|^{p-2} \nabla u\big)  + b(x,u,\nabla u)=0$ with $p>1$ and $b$ satisfying the growth condition $|b(x,z,\eta)|\leq \gamma_1 |\eta|^{p-1}$. Thus, our Theorem~\ref{Lip-plus} generalizes the result obtained in  \cite{D,BDM}. The significance  of our main result 
 in  Theorem~\ref{Lip} is that it holds true for  the general equation \eqref{ref-eqn} with $\ba$, $b$ depending on $x$, $z$ and having general structure (H1)--(H5).

Our main motivation for deriving the local gradient estimates   in Theorems~\ref{Lip} and \ref{Lip-plus} is to be able to establish   $W^{1,q}$-estimates (for $q>p$) for weak solutions to  a large class of equations of the form $\div \A(x,u,\nabla u) + B(x,u,\nabla u) = \div \mathbf F$, where the vector field $\A$ is 
allowed to be discontinuous in $x$, Lipschitz continuous in $u$ and its  growth in the gradient variable  is  like the $p$-Laplace operator with $1<p<\infty$. This is achieved 
in our forthcoming work \cite{HNP2} by using Caffarelli-Peral perturbation technique \cite{CP}, and  the
 quantified estimate \eqref{Win} for \eqref{ref-eqn} plays an essential role in performing that process.

The proofs of Theorems \ref{Lip} and \ref{Lip-plus} will be given in section \ref{sec:proof-main-theorem}, after some preparations in sections \ref{pre} and \ref{inter}. We prove them by employing  standard iteration and interpolation techniques together with  refining  some results presented in \cite{D, La}.
However, some lower order terms arising from the $x$, $z$ dependence are treated carefully and  differently (see \eqref{b-i} below) compared to the known work in order to obtain the desired homogeneous estimate.

\section{Preliminary estimates} \label{pre}

In this section we always assume that $u$ is a weak solution of \eqref{ref-eqn}.
 We  begin with a result which is a  simple 
modification of \cite[pages 834-835]{D}. Throughout the paper, 
we  denote $w = |\nabla u|^2$ and $|\nabla^2 u| =(\sum_{i,j=1}^n |u_{x_ix_j}|^2)^{1/2}$.

\begin{lemma} \label{test-lemma} Assume that \textup{(H2)--(H5)} hold. There exists a constant $C >0$ depending only on $n$, $\gamma_0$ and $\gamma_1$ such that
\begin{multline}\label{Iest}
\int_{B_3} w^{\frac{p-2}{2}} |\nabla^2 u|^2 \beta(w) \xi^2 dx 
+ \int_{B_3} w^{\frac{p-2}{2}}|\nabla w|^2\beta'(w)\xi^2 dx \\
\leq C  \left\{ \int_{B_3} \big(w^{\frac{p-2}{2}} |\nabla w| + w^{\frac{p}{2}} + w^{\frac{p+1}{2}}\big) |\nabla \xi| \beta(w) \xi dx +
\int_{B_3} (w^{\frac{p}{2}} + w^{\frac{p+2}{2}})\big[\beta(w) + w\beta'(w) \big] \xi^2 dx
 \right\}
\end{multline}
 for any nonnegative function $\xi \in C_0^\infty(B_3)$ and any $\beta\in \text{Lip}_{loc}([0,\infty))$ satisfying   
$\beta,\beta' \geq 0$.  
\end{lemma}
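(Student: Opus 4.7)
The plan is to run a Bernstein-type argument in the spirit of \cite[pp.~834--835]{D}, carefully adapting it to the $(x,z)$-dependence and to the $|\eta|^p$-term allowed in (H4)--(H5). Formally differentiating the equation in $x_s$ gives
\beqs
-\sum_i \partial_{x_i}\Bigl[\tfrac{\partial \ba^i}{\partial x_s}+\tfrac{\partial \ba^i}{\partial z}u_{x_s}+\sum_k \tfrac{\partial \ba^i}{\partial \eta_k}u_{x_sx_k}\Bigr]+\partial_{x_s}b=0,
\eeqs
which I would justify through difference quotients, so that testing against $\varphi=u_{x_s}\beta(w)\xi^2$ and summing over $s$ is legitimate. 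After an integration by parts on the divergence part, the equation becomes an identity between the four groups of terms coming from the four terms above, where the differentiations of the test function produce factors $u_{x_sx_i}\beta(w)\xi^2$, $u_{x_s}\beta'(w)w_{x_i}\xi^2$ and $2u_{x_s}\beta(w)\xi\,\xi_{x_i}$.

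The two positive contributions on the left-hand side of \eqref{Iest} come from the principal part $\sum_k(\partial\ba^i/\partial\eta_k)u_{x_sx_k}$. Pairing it with $u_{x_sx_i}\beta(w)\xi^2$ and applying (H2) pointwise with $\xi^{(s)}=\nabla u_{x_s}$ yields the first term on the left-hand side, bounded below by $\gamma_0\,w^{(p-2)/2}|\nabla^2u|^2\beta(w)\xi^2$. Pairing it with $u_{x_s}\beta'(w)w_{x_i}\xi^2$ and using the identity $\sum_s u_{x_s}u_{x_sx_k}=\tfrac12 w_{x_k}$ reduces it to $\tfrac12\sum_{i,k}(\partial\ba^i/\partial\eta_k)w_{x_i}w_{x_k}\beta'(w)\xi^2$, again bounded below by (H2) with $\xi=\nabla w$. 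All remaining terms are dealt with by Young's inequality with a small parameter. The principal part paired with $2u_{x_s}\beta(w)\xi\,\xi_{x_i}$ and (H3) produces the boundary-type term $w^{(p-2)/2}|\nabla w||\nabla\xi|\beta(w)\xi$ directly. Each of the $\partial\ba/\partial x$ and $\partial\ba/\partial z$ groups is controlled, via (H4), by $\gamma_1(w^{(p-1)/2}+w^{p/2})$ times the three factors $|\nabla^2u|\beta\xi^2$, $w^{1/2}|\nabla w|\beta'\xi^2$ and $w^{1/2}|\nabla\xi|\beta\xi$; the $|\nabla^2u|$ and $|\nabla w|$ pieces are absorbed into the two positive terms, and the leftover yields precisely $(w^{p/2}+w^{(p+2)/2})\beta\xi^2$, $(w^{p/2}+w^{(p+2)/2})w\beta'\xi^2$ and $(w^{p/2}+w^{(p+1)/2})|\nabla\xi|\beta\xi$ after the arithmetic
\beqs
\frac{(w^{(p-1)/2}+w^{p/2})^2}{w^{(p-2)/2}}\lesssim w^{p/2}+w^{(p+2)/2},\qquad \frac{(w^{p/2}+w^{(p+1)/2})^2}{w^{(p-2)/2}}\lesssim w^{(p+2)/2}+w^{(p+4)/2}.
\eeqs

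The genuinely delicate point is the $b$-term: treating $\int(\partial_{x_s}b)u_{x_s}\beta(w)\xi^2$ by estimating $|\partial_{x_s}b|$ would require control on derivatives of $b$ that we do not have, and the direct $|b|\le\gamma_1(|\eta|^{p-1}+|\eta|^p)$ bound on $\int b\,\partial_{x_s}\varphi$ would clash with the $z$-dependence. Instead, I would integrate by parts once more in $x_s$ to convert it to
\beqs
-\int_{B_3} b\bigl[\Delta u\,\beta(w)\xi^2+\nabla u\cdot\nabla w\,\beta'(w)\xi^2+2\nabla u\cdot\nabla\xi\,\beta(w)\xi\bigr]\,dx,
\eeqs
which via (H5) produces exactly the same three types of factors as the (H4) contribution and is absorbed identically; this is the ``lower-order term'' refinement alluded to in \eqref{b-i} of the introduction. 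Collecting everything gives \eqref{Iest}. I expect the only real obstacle to be the bookkeeping needed to track the new powers $w^{(p+1)/2}$ and $w^{(p+2)/2}$ produced by the $|\eta|^p$-growth in (H4)--(H5), which do not appear in the classical DiBenedetto setting.
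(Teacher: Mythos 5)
Your proof follows the paper's argument step for step: approximate so that $u\in C^2$ with $|\nabla u|>0$, differentiate in $x_s$, test with $u_{x_s}\beta(w)\xi^2$, extract the two coercive terms from the principal part via (H2) and the identity $\sum_s u_{x_s}u_{x_sx_k}=\tfrac12 w_{x_k}$, and absorb everything else by Young's inequality with exactly the powers you list; in particular, your ``integrate by parts once more'' on the $b$-term is precisely the paper's passage from $\int(\partial_{x_s}b)\varphi$ to $-\int b\,\partial_{x_s}\varphi$, which is then bounded via (H5). The only blemishes are cosmetic: a spurious minus sign in front of $\sum_i\partial_{x_i}[\cdots]$ in the differentiated equation, and your parenthetical claim that the direct $|b|\le\gamma_1(|\eta|^{p-1}+|\eta|^p)$ bound on $\int b\,\partial_{x_s}\varphi$ ``would clash with the $z$-dependence'' is mistaken --- that is in fact exactly what you (correctly) do in the next line, and there is no clash since (H5) is a pointwise bound valid at $z=u(x)$.
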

\begin{proof} Using the difference-quotient argument as indicated in \cite{Uh} (or 
\cite[Proposition 1]{Tol2}) or using the approximation procedure as in \cite{D}, we may assume that $u \in C^2(B_3)$ and $|\nabla u(x)|>0$ for every 
$x\in B_3$. For each $i =1,2,\dots, n$, define
\begin{equation} \label{b-i}
\mathbf{b}_i(x,z, \eta) =  \frac{\partial \ba}{\partial x_i}(x, z, \eta) + 
 \frac{\partial \ba}{ \partial z} (x,z, \eta) \eta_i, \quad 
\forall  \ (x,z,\eta) \in B_3\times \K\times \mathbb{R}^n. 
\end{equation}
 By differentiating equation \eqref{ref-eqn} with respect to $x_i$, we have
\[
\text{div}\Big [ \sum_{j=1}^n u_{x_ix_j} \frac{\partial \ba}{\partial \eta_j}(x, u, \nabla u) +
\mathbf{b}_i(x,u,\nabla u) \Big] + \frac{d}{d x_i} b(x,u, \nabla u)=0 \quad \text{in} \quad B_3
\]
in the weak sense.
Using $\varphi = u_{x_i}\beta(w) \xi^2$ as a test function in the weak formulation  and 
summing over $i =1, 2,\dots, n$, we  obtain
\begin{multline} \label{test-1}
 \sum_{i,j,k =1}^n\int_{B_3}\frac{\partial \ba^k}{\partial \eta_j}u_{x_ix_j}\Big[u_{x_ix_k} \beta(w)\xi^2 +u_{x_i} w_{x_k} \beta'(w) \xi^2 + 2u_{x_i} \beta(w) \xi \xi_{x_k} \Big] dx \\
 = -\sum_{i=1}^n \int_{B_3} \Big[\mathbf{b}_i(x,u,\nabla u)\cdot \nabla \varphi + b(x,u,\nabla u)  \varphi_{x_i}\Big]dx. 
\end{multline}

Dealing with the LHS of \eqref{test-1}, we have  from assumptions (H2) and (H3) that
\beqs  \sum_{i,j,k =1}^n\int_{B_3}\frac{\partial \ba^k(x,u,\nabla u)}{\partial \eta_j} u_{x_ix_j} u_{x_ix_k} \beta(w)\xi^2dx  \geq \gamma_0
\sum_{i=1}^n \int_{B_3} w^{\frac{p-2}{2}} |\nabla u_{x_i}|^2 \beta(w) \xi^2 dx,
\eeqs
\beqs \sum_{i,j,k =1}^n\int_{B_3}\frac{\partial \ba^k(x,u,\nabla u)}{\partial \eta_j}u_{x_ix_j} u_{x_i} w_{x_k} \beta'(w) \xi^2 dx  
\geq \frac{\gamma_0}{2} \int_{B_3} w^{\frac{p-2}{2}} |\nabla w|^2 \beta'(w) \xi^2 dx,
\eeqs
and
\beqs 
\Big | \sum_{i,j,k =1}^n \int_{B_3}\frac{\partial \ba^k(x,u,\nabla u)}{\partial \eta_j}u_{x_ix_j} u_{x_i} \beta(w) \xi \xi_{x_k} dx\Big| 
 \le \frac{n\gamma_1}{2} \int_{B_3} w^{\frac{p-2}{2}} |\nabla w| |\nabla \xi| \beta(w) \xi  dx.
\eeqs
Therefore, 
\begin{multline} \label{RHS}
\text{LHS of \eqref{test-1}} 
\geq \gamma_0 \int_{B_3} w^{\frac{p-2}{2}} |\nabla^2 u|^2 \beta(w) \xi^2 dx  + 
\frac{\gamma_0}{2} \int_{B_3} w^{\frac{p-2}{2}} |\nabla w|^2 \beta'(w) \xi^2 dx  \\
 -n\gamma_1 \int_{B_3} w^{\frac{p-2}{2}} |\nabla w| |\nabla \xi| \beta(w) \xi  dx. 
\end{multline}

For the RHS of \eqref{test-1}, note that
\beqs
\sum_{i=1}^n |\nabla \varphi_i|\le  C_n \Big(|\nabla^2 u| \beta(w) \xi^2 +|\nabla u| \beta'(w) |\nabla w|\xi^2+|\nabla u||\nabla \xi| \beta(w) \xi\Big),
\eeqs
and from  (H4)--(H5) that
\begin{equation}\label{eq:b_i-b}
 |\mathbf b_i(x,u,\nabla u)|+|b(x,u,\nabla u)|\le 2\gamma_1 (w^{\frac{p-1}{2}} + w^{\frac{p}{2}}).
\end{equation}
Therefore, there exists a constant $C=C(n,\gamma_1)>0$ such that
\beqs 
\text{RHS of \eqref{test-1}} 
\le C   \int_{B_3} (w^{\frac{p-1}{2}} + w^{\frac{p}{2}})\Big(|\nabla^2 u| \beta(w) \xi^2 +w^{\frac12} \beta'(w) 
|\nabla w|\xi^2+w^{\frac12}|\nabla \xi| \beta(w) \xi\Big)dx.
\eeqs
We then estimate for $\epsilon>0$ that 
\begin{align*}
C   \int_{B_3} (w^{\frac{p-1}{2}} + w^{\frac{p}{2}}) |\nabla^2 u| \beta(w) \xi^2 dx
& \le  \epsilon  \int_{B_3} w^{\frac{p-2}{2}} |\nabla^2 u|^2 \beta(w)\xi^2 dx  \\
&\quad + C_\epsilon\int_{B_3} (w^{\frac{p}{2}} + w^{\frac{p+2}{2}} )\beta(w) \xi^2 dx, 
\end{align*}
\begin{align*}
C  \int_{B_3} (w^{\frac{p}{2}} + w^{\frac{p+1}{2}}) \beta'(w) |\nabla w|\xi^2dx
&\le  \epsilon \int_{B_3} w^{\frac{p-2}{2}} |\nabla w|^2 \beta'(w) \xi^2 dx \\
&\quad + C_\epsilon  \int_{B_3} (w^{\frac{p+2}{2}} + w^{\frac{p+4}{2}}) \beta'(w) \xi^2 dx.
\end{align*}
Consequently,
\begin{multline} \label{LHS}
\text{RHS of \eqref{test-1}} 
\leq \epsilon   \int_{B_3} w^{\frac{p-2}{2}} |\nabla^2 u|^2 \beta(w)\xi^2 dx  
+ \epsilon \int_{B_3} w^{\frac{p-2}{2}} |\nabla w|^2 \beta'(w) \xi^2 dx \\
 + C_\epsilon\Big[ \int_{B_3} (w^{\frac{p}{2}} + w^{\frac{p+2}{2}}) [\beta(w) + w\beta'(w) ] \xi^2 dx
+ \int_{B_3} (w^{\frac{p}{2}} + w^{\frac{p+1}{2}}) |\nabla \xi| \beta(w) \xi dx \Big].
\end{multline}
The lemma then follows from 
 \eqref{RHS} and  \eqref{LHS}  by taking $\epsilon=\gamma_0/4$.
\end{proof}

As a consequence of Lemma~\ref{test-lemma}, we obtain: 
\begin{lemma} \label{DiGiorgi-Class} Assume that \textup{(H2)--(H5)} hold. Let $v = w^{p/2} = |\nabla u|^p$. Then there 
exists  $C=C( n, p, \gamma_0,\gamma_1)>0$ such that 
\[
 \int_{B_3}|\nabla (v -k)^+|^2 \xi^2 dx \leq C \int_{B_3} [(v-k)^+]^2|\nabla\xi|^2 dx
+C \int_{B_3} (w^p + w^{p+1})\chi_{v >k}(x)\xi^2 dx
\]
for every constant $k>0$ and  every nonnegative function $\xi \in C_0^\infty(B_3)$.
\end{lemma}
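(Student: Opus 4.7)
The plan is to apply Lemma~\ref{test-lemma} with a specific choice of $\beta$ that makes the second term on the LHS of \eqref{Iest} reproduce $\int |\nabla (v-k)^+|^2 \xi^2\,dx$. Since $\nabla v = \tfrac{p}{2} w^{(p-2)/2}\nabla w$ on $\{w>0\}$, one sees that $w^{(p-2)/2}|\nabla w|^2\cdot\beta'(w)$ equals $(2/p)^2|\nabla v|^2$ precisely when $\beta'(w) = w^{(p-2)/2}$, and one wants this active only on $\{v>k\}$. This forces the choice
\beqs
\beta(w) = \tfrac{2}{p}\bigl(w^{p/2}-k\bigr)^+ = \tfrac{2}{p}(v-k)^+, \qquad \beta'(w) = w^{(p-2)/2}\chi_{\{w^{p/2}>k\}}.
\eeqs
Both $\beta$ and $\beta'$ are nonnegative. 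Because $k>0$, $\beta$ vanishes on $[0,k^{2/p}]$, so it avoids the singular behavior of $w \mapsto w^{p/2}$ at the origin when $p<2$; hence $\beta\in \text{Lip}_{loc}([0,\infty))$ and Lemma~\ref{test-lemma} applies. With this $\beta$, the second LHS term of \eqref{Iest} becomes $\int w^{p-2}|\nabla w|^2 \chi_{v>k}\xi^2\,dx = (2/p)^2\int |\nabla (v-k)^+|^2\xi^2\,dx$, which is exactly what we want; the first LHS term is discarded since it is nonnegative.

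Next I would dispatch the three types of terms on the RHS of \eqref{Iest} by Young's inequality, using crucially that the factor $(v-k)^+$ inside $\beta(w)$ automatically inserts $\chi_{v>k}$ everywhere. For the term $\int w^{(p-2)/2}|\nabla w||\nabla\xi|\beta(w)\xi\,dx$, Young's inequality applied to $w^{(p-2)/2}|\nabla w|\chi_{v>k}\cdot \xi$ against $(v-k)^+|\nabla\xi|$ produces $\epsilon w^{p-2}|\nabla w|^2\chi_{v>k}\xi^2 + C_\epsilon [(v-k)^+]^2|\nabla\xi|^2$, whose first piece is absorbed into the LHS for $\epsilon$ small. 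For the terms $\int(w^{p/2}+w^{(p+1)/2})|\nabla\xi|\beta(w)\xi\,dx$, Young's inequality together with the estimate $(w^{p/2}+w^{(p+1)/2})^2\le 2(w^p+w^{p+1})$ and the implicit $\chi_{v>k}$ produces contributions of the form $[(v-k)^+]^2|\nabla\xi|^2$ and $(w^p+w^{p+1})\chi_{v>k}\xi^2$.

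Finally, for $\int(w^{p/2}+w^{(p+2)/2})[\beta(w)+w\beta'(w)]\xi^2\,dx$, I would note that $w\beta'(w)=v\chi_{v>k}$ and $(v-k)^+\le v\chi_{v>k}$, so $\beta(w)+w\beta'(w)\le (2/p+1)v\chi_{v>k}$; multiplying by $w^{p/2}+w^{(p+2)/2}$ and using the identities $w^{p/2}\cdot v = w^p$, $w^{(p+2)/2}\cdot v = w^{p+1}$ gives a bound by $C(w^p+w^{p+1})\chi_{v>k}\xi^2$. Combining the three estimates with the absorbed first term yields the claim. I do not anticipate a genuine obstacle: once the right $\beta$ is identified, everything reduces to Young's inequality and bookkeeping of exponents. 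The only point worth double-checking is that the hypothesis $\beta\in \text{Lip}_{loc}([0,\infty))$ really holds for our $\beta$ when $p<2$, which is guaranteed by $k>0$ as noted above.
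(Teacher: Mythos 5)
Your proof is correct and follows essentially the same route as the paper: the paper takes $\beta(s)=(s^{p/2}-k)^+$ (your choice differs only by the harmless factor $2/p$), drops the first LHS term of \eqref{Iest}, identifies the second with $\tfrac{4}{p^2}\int|\nabla(v-k)^+|^2\xi^2\,dx$, uses $\beta(w)+w\beta'(w)\le(1+\tfrac{p}{2})w^{p/2}\chi_{v>k}$, and closes with Cauchy--Schwarz/Young exactly as you do. Your explicit remark that $k>0$ is what makes $\beta\in\text{Lip}_{loc}([0,\infty))$ when $p<2$ is a good point that the paper leaves implicit.
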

\begin{proof} We  apply Lemma~\ref{test-lemma} with $\beta(s) = (s^{p/2} - k)^+$. Then by dropping the first term in \eqref{Iest}  and using
$\beta(w)+w\beta'(w) \le (1+\frac{p}{2})w^{p/2}\chi_{v>k}$,  we obtain
\begin{align*}
  &\frac{p}{2}\int_{B_3} w^{p-2}|\nabla w|^2\chi_{v >k} (x)\xi^2 dx\\ 
&\leq C \left\{   \int_{B_3}\big(w^{\frac{p-2}{2}}|\nabla w|+ w^{\frac{p}{2}} + w^{\frac{p+1}{2}} \big) \xi (v-k)^+ |\nabla \xi|  dx   +\frac{p+2}{2}\int_{B_3}(w^p +w^{p+1}) \chi_{v >k}(x) \xi^2 dx
 \right\}.
\end{align*}
The lemma then follows from Cauchy-Schwarz's inequality and  the fact that
\[
\int_{B_3} w^{p-2}|\nabla w|^2\chi_{v >k} (x)\xi^2 dx 
= \frac{4}{p^2}\int_{B_3} |\nabla (v-k)^+|^2\xi^2 dx.
\]
\end{proof}
\begin{remark}\label{rm:better-est}
If we assume  \eqref{strengthened-cond} in place of {\rm (H4)--(H5)}, then \eqref{eq:b_i-b} becomes
 \[|\mathbf b_i(x,u,\nabla u)|+|b(x,u,\nabla u)|\le \gamma_1 w^{\frac{p-1}{2}}.
  \]
Then by inspecting the proof we see that  \eqref{Iest} holds without the terms $w^{\frac{p+1}{2}}$ and $w^{\frac{p+2}{2}}$.
As a consequence, instead of 
Lemma~\ref{DiGiorgi-Class} we now obtain
\begin{equation}\label{strengthened-est}
 \int_{B_3}|\nabla (v -k)^+|^2 \xi^2 dx \leq C \int_{B_3} [(v-k)^+]^2|\nabla\xi|^2 dx
+C \int_{B_3} w^p \chi_{v >k}(x)\xi^2 dx
\end{equation}
for every constant $k>0$ and  every nonnegative function $\xi \in C_0^\infty(B_3)$.
\end{remark}

The next lemma  gives an  estimate for $\|\nabla u\|_{L^p}$ in terms of $\|u\|_{L^\infty}$.
\begin{lemma}  \label{Lp-est} Assume that \textup{(H1)--(H3)} and \textup{(H5)} hold. There exists  a constant $C>0$ depending only on $p$, $n$, $\gamma_0$ and $\gamma_1$ such that
\begin{equation}
\label{W^{1,p}-L^infty}
\int_{B_\frac{r}{2}(x_0)} |\nabla u|^p dx \leq C  r^n \big(r^{-p} +1\big) e^{C\|u\|_{L^\infty(B_{r}(x_0))}}\quad \mbox{for every}\quad  B_{r}(x_0) \subset B_3.
\end{equation}
\end{lemma}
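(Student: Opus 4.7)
The plan is to prove this Caccioppoli-type estimate via a standard exponential-test-function argument, which is specifically designed to handle the $|\nabla u|^p$ growth in the lower-order term $b$ permitted by (H5).

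First I would record two elementary consequences of (H1)--(H3) obtained by integrating along the ray $t\mapsto \mathbf a(x,z,t\eta)$ from $0$ to $1$:
\[
\mathbf a(x,z,\eta)\cdot \eta \geq c_0 |\eta|^p \quad\text{and}\quad |\mathbf a(x,z,\eta)|\leq c_1 |\eta|^{p-1},
\]
for constants $c_0,c_1$ depending only on $p,\gamma_0,\gamma_1$. These turn (H2)--(H3) into coercivity and sublinear growth in the flux.

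Next, fix $B_r(x_0)\subset B_3$, let $M=\|u\|_{L^\infty(B_r(x_0))}$, and pick a cutoff $\xi\in C_0^\infty(B_r(x_0))$ with $0\leq\xi\leq 1$, $\xi\equiv 1$ on $B_{r/2}(x_0)$ and $|\nabla \xi|\leq C/r$. For a constant $\lambda>0$ to be chosen, I would test the equation with
\[
\varphi=\bigl(e^{\lambda(u+2M)}-1\bigr)\xi^p,
\]
which is admissible since $u\in L^\infty(B_r(x_0))$ makes $\varphi\in W_0^{1,p}\cap L^\infty$. Using
\[
\nabla\varphi=\lambda\, e^{\lambda(u+2M)}\nabla u\,\xi^p + p\bigl(e^{\lambda(u+2M)}-1\bigr)\xi^{p-1}\nabla\xi,
\]
the weak formulation yields
\[
\lambda\!\int e^{\lambda(u+2M)}\mathbf a\!\cdot\!\nabla u\,\xi^p
=-p\!\int\!\bigl(e^{\lambda(u+2M)}-1\bigr)\xi^{p-1}\mathbf a\!\cdot\!\nabla\xi
+\int b\varphi.
\]

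Now I would bound the left side below by $\lambda c_0\int e^{\lambda(u+2M)}|\nabla u|^p\xi^p$, and estimate the right side from above using the two preliminary bounds and (H5): the flux term contributes $Cc_1\int e^{\lambda(u+2M)}|\nabla u|^{p-1}\xi^{p-1}|\nabla\xi|$, while $b\varphi$ contributes at most $C\gamma_1 e^{2\lambda M}\int (|\nabla u|^{p-1}+|\nabla u|^p)\xi^p$ on the support of $\xi$. The decisive step is to fix $\lambda=\lambda(p,\gamma_0,\gamma_1)$ large enough that $\lambda c_0-C\gamma_1\geq 1$, thereby absorbing the bad $|\nabla u|^p$ term from $b$ into the left side. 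A standard application of Young's inequality $|\nabla u|^{p-1}|\nabla\xi|\leq \epsilon|\nabla u|^p\xi+C_\epsilon|\nabla\xi|^p\xi^{1-p}$ and the analogous inequality without $|\nabla\xi|$ disposes of the $|\nabla u|^{p-1}$ terms for small $\epsilon$. Using $1\leq e^{\lambda(u+2M)}\leq e^{3\lambda M}$ on the support of $\xi$ and then discarding the exponential on the left, I arrive at
\[
\int_{B_{r/2}(x_0)}|\nabla u|^p\,dx \leq C e^{3\lambda M}\int_{B_r(x_0)}\bigl(|\nabla\xi|^p+\xi^p\bigr)dx\leq C r^n(r^{-p}+1)e^{C\|u\|_{L^\infty(B_r(x_0))}},
\]
which is precisely \eqref{W^{1,p}-L^infty}.

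The main obstacle, and the reason for the exponential factor in the statement, is that (H5) allows $|b|\lesssim |\nabla u|^p$, so a naive test function $\varphi=u\xi^p$ or even $\varphi=(u+M)\xi^p$ would produce a $|\nabla u|^p$ term on the right that cannot be absorbed. Only the exponential weight $e^{\lambda u}$ creates the extra $\lambda e^{\lambda u}|\nabla u|^p\xi^p$ contribution on the left whose coefficient can be tuned to dominate $C\gamma_1 e^{\lambda u}|\nabla u|^p\xi^p$; everything else in the proof is routine cutoff and Young manipulation.
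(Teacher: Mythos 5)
Your argument is essentially the paper's: test with an exponential weight of the form $e^{\lambda u}\xi^p$, choose $\lambda$ depending only on $p,\gamma_0,\gamma_1$ so that the coercive $\lambda$-term dominates the $|\nabla u|^p$ contribution allowed by (H5), then finish with Young's inequality and the cutoff bounds. One line needs to be tightened: when you estimate $\int b\varphi$ you write the bound with a constant factor $e^{2\lambda M}$ in place of the variable weight $e^{\lambda(u+2M)}$. Taken literally this breaks the absorption you invoke next, since the left-hand side only carries the weight $e^{\lambda(u+2M)}\geq e^{\lambda M}$ (not $e^{2\lambda M}$), so that the comparison would require $\lambda c_0 e^{\lambda M}\geq C\gamma_1 e^{2\lambda M}$, which no $M$-independent choice of $\lambda$ achieves. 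The fix is exactly what you do a few lines later and what the paper does from the start: keep the variable weight $e^{\lambda(u+2M)}$ (equivalently $e^{\lambda u}$) on \emph{every} term, perform the absorption at the integrand level with that common weight so that $\lambda c_0-C\gamma_1$ is what gets signed, and only at the very end replace the exponential by its supremum $e^{3\lambda M}$ on the right and its infimum $1$ on the left. With that consistency restored your proof coincides with the paper's; the shift $u\mapsto u+2M$ and the subtraction of $1$ in your test function are harmless but unnecessary, since $e^{\lambda u}\xi^p$ is already an admissible nonnegative test function.
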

\begin{proof} We follow the arguments in the proof of \cite[Lemma 1.1, p. 247]{La}.  Let $M=\|u\|_{L^\infty(B_{r}(x_0))}$. Since \eqref{W^{1,p}-L^infty} is trivial if $M=\infty$, we can  assume that $M<\infty$. 
Let $\xi \in C_0^\infty(B_{r}(x_0))$  be the standard cut-off function  with $\xi =1$ on 
$B_{\frac{r}{2}}(x_0)$ and $|\nabla \xi| \leq \frac{c}{r}$.  Then for any $\lambda >0$, by taking $e^{\lambda u} \xi^p$ as a test function we obtain
\[
\int_{B_{r}(x_0)} e^{\lambda u} \Big[ \lambda( \ba \cdot \nabla u) \xi^p + 
p( \ba \cdot \nabla \xi) \xi^{p-1} \Big ] dx =\int_{B_{r}(x_0)} b(x,u,\nabla u)\, e^{\lambda u}\xi^p dx.
\]
Note that as a consequence of (H1)--(H3), we have
\[
\ba(x, u, \nabla u) \cdot \nabla u \geq \gamma_0 |\nabla u|^p \quad \text{and} \quad 
|\ba(x, u, \nabla u) \cdot \nabla \xi| \xi^{p-1}\leq \gamma_1 |\nabla u|^{p-1} |\nabla \xi|\xi^{p-1}.
\]
These together with condition (H5)  give
\begin{align*}
\big(\lambda \gamma_0 -\gamma_1\big) \int_{B_{r}(x_0)}  e^{\lambda u}  |\nabla u|^{p} \xi^p dx 
&\leq \int_{B_{r}(x_0)}e^{\lambda u} \Big(  p \gamma_1 |\nabla u|^{p-1}\xi^{p-1} |\nabla \xi| +\gamma_1 |\nabla u|^{p-1}\xi^{p}\Big)dx \\
&\leq C\int_{B_{r}(x_0)} e^{\lambda u} \Big(|\nabla u|^p \xi^p + |\nabla \xi|^p  + \xi^p\Big) dx,
\end{align*}
where $C$ depends only 
on $p$ and $\gamma_1$.
Choosing $\lambda = (\gamma_1+2C)/\gamma_0$, we then get
\begin{equation*} \label{Lp-La}
\int_{B_{r}(x_0)}|\nabla u|^p \xi^p dx \leq e^{\frac{2(\gamma_1 + 2C) M}{\gamma_0}}\int_{B_{r}(x_0)} \big(|\nabla \xi|^p +\xi^p\big) dx
\leq   e^{\frac{2(\gamma_1 + 2C) M}{\gamma_0}} (c^p r^{-p} +1)|B_{r}(x_0)|.
\end{equation*}
 This yields \eqref{W^{1,p}-L^infty} as desired since $\xi =1$ on $B_{\frac{r}{2}}(x_0)$.
\end{proof}

We close the section by recalling a result about H\"older estimates for solutions to \eqref{ref-eqn}.
\begin{theorem}\label{thm:Holder} {\rm(\cite[Theorem~1.1, page 251]{La})}
Assume that \textup{(H1)--(H3)}  and  \textup{(H5)} hold. Let $u$ be a  weak solution of \eqref{ref-eqn} that satisfies \eqref{rough-ass}. Then there exist 
 constants $C_0>0$ and $\alpha\in (0,1)$ depending only on $n$, $p$, $\gamma_0$, $\gamma_1$ and $M_0$ such that
\[
|u(x) -u(y)|\leq C_0 |x-y|^\alpha \quad \mbox{for every}\quad x,y\in B_{\frac{21}{8}}.
\]
\end{theorem}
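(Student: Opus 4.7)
The result is quoted from \cite[Theorem 1.1]{La}, so the plan is to reproduce the standard De Giorgi--Ladyzhenskaya--Ural'tseva argument specialized to the present structure (H1)--(H3), (H5). The strategy is to show that both $u$ and $-u$ belong to a De Giorgi class of order $p$ on $B_{11/4}$ with constants depending on $M_0$, and then invoke the classical oscillation lemma to obtain the H\"older estimate on $B_{21/8}$.

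Fix a ball $B_r(x_0)\subset B_{11/4}$, a level $k\in\R$, radii $r/2\le r'<r$, and a cutoff $\xi\in C_0^\infty(B_r(x_0))$ with $\xi\equiv 1$ on $B_{r'}(x_0)$ and $|\nabla\xi|\le C/(r-r')$. The naive test function $(u-k)^+\xi^p$, together with (H1)--(H2) (which give $\ba(x,u,\nabla u)\cdot\nabla u\ge\tfrac{\gamma_0}{p-1}|\nabla u|^p$) and the growth bounds (H3), (H5), produces a Caccioppoli-type inequality of the form
\[
\int_{A^+(k,r')}|\nabla u|^p\,dx\le C\Big[(r-r')^{-p}\int_{A^+(k,r)}[(u-k)^+]^p\,dx+\int_{A^+(k,r)}|\nabla u|^p(u-k)^+\,dx+|A^+(k,r)|\Big],
\]
where $A^+(k,r)=\{u>k\}\cap B_r(x_0)$. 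The middle term, generated by the $|\eta|^p$ piece of (H5), is the essential obstruction: it cannot be absorbed into the left-hand side by Young's inequality alone.

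To overcome this I would follow the Kruzhkov exponential-test-function device already exploited in Lemma~\ref{Lp-est}: replace $(u-k)^+\xi^p$ by $\varphi=(e^{\lambda(u-k)^+}-1)\xi^p$ with $\lambda$ large (depending on $p,\gamma_0,\gamma_1$). Differentiation of $\varphi$ contributes an extra factor $\lambda e^{\lambda(u-k)^+}$ to the leading $\ba\cdot\nabla u$ term, so (H1)--(H2) give a lower bound $\lambda\gamma_0(p-1)^{-1}\int_{A^+(k,r')}e^{\lambda(u-k)^+}|\nabla u|^p\,dx$ on the left-hand side. The bad $|\nabla u|^p$ contribution from (H5) is controlled by $\gamma_1\int e^{\lambda(u-k)^+}|\nabla u|^p\xi^p\,dx$ using $e^{\lambda(u-k)^+}-1\le e^{\lambda(u-k)^+}$ on $\{u>k\}$, and choosing $\lambda>2(p-1)\gamma_1/\gamma_0$ permits absorption. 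The $|\eta|^{p-1}$ pieces from (H3), (H5) are handled by standard Young inequalities. The hypothesis $\|u\|_{L^\infty}\le M_0$ enters crucially here: $1\le e^{\lambda(u-k)^+}\le e^{2\lambda M_0}$ everywhere, so dividing through by the bounded exponential weight yields the genuine De Giorgi inequality
\[
\int_{A^+(k,r')}|\nabla u|^p\,dx\le\frac{C(M_0)}{(r-r')^p}\int_{A^+(k,r)}[(u-k)^+]^p\,dx+C(M_0)|A^+(k,r)|,
\]
and a symmetric estimate for $(k-u)^+$ follows in exactly the same way.

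Once $u$ and $-u$ lie in the De Giorgi class on $B_{11/4}$ with constants depending only on $n,p,\gamma_0,\gamma_1,M_0$, the classical oscillation lemma (see \cite[Ch.~II]{La}) gives $\mathrm{osc}(u,B_{\rho/2}(x_0))\le\theta\,\mathrm{osc}(u,B_\rho(x_0))$ for some $\theta\in(0,1)$ independent of $\rho$, which iterates into the desired bound $|u(x)-u(y)|\le C_0|x-y|^\alpha$ for $x,y\in B_{21/8}$. The main obstacle throughout is the natural-growth term $|\eta|^p$ in (H5); the $L^\infty$ bound \eqref{rough-ass} is exploited precisely through the exponential test function to absorb this term into an $M_0$-dependent constant, after which the remaining iteration is entirely standard.
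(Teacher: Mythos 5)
The paper does not actually prove this theorem; it quotes it directly from \cite[Theorem~1.1, page~251]{La}, and your reconstruction follows the same argument used in that reference: the exponential change of test function $\varphi=(e^{\lambda(u-k)^+}-1)\xi^p$ to absorb the natural-growth $|\eta|^p$ contribution of (H5) under the $L^\infty$ bound \eqref{rough-ass}, De~Giorgi class membership for $\pm u$, and the oscillation lemma. The only minor imprecision is that the two-sided bound $1\le e^{\lambda(u-k)^+}\le e^{2\lambda M_0}$ requires $k\ge -M_0$, but this is automatic since the De~Giorgi iteration only uses levels between the infimum and supremum of $u$ on the ball, both of which lie in $[-M_0,M_0]$.
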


\section{Interpolation inequalities}\label{inter}

In this section we collect some known interpolation results which will be used later.
 We note that they are independent of the PDE under consideration.  
\begin{lemma}{\rm(\cite[Lemma~4.5, Chapter~2]{La}  and \cite[Lemma~2.4]{D})}
\label{embed-iq}
Let $p>1$, $\rho>0$, and $f\in C^2(\overline {B_\rho(x_0)})$ satisfy $|\nabla f|>0$.
Then for any $\xi\in C_0^1(B_\rho(x_0))$, we have
\[
\int_{B_\rho(x_0)}|\nabla f|^{p+2} \xi^2 dx \leq 2 (\sqrt{n} +p)^2 \big(\text{osc}_{B_\rho(x_0)} f\big)^2   \int_{B_\rho(x_0)}\Big[
|\nabla f|^{p-2}|\nabla^2 f|^2 \xi^2   
+ |\nabla f|^{p}  |\nabla \xi|^2\Big]dx ,
\]
where $\text{osc}_{B_\rho(x_0)} f =\sup_{x\in B_\rho(x_0)}{|f(x) - f(x_0)|}$.
\end{lemma}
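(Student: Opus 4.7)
My plan is to prove this by the standard integration-by-parts / interpolation argument used in Ladyzhenskaya--Uraltceva and DiBenedetto. Since the statement is translation-invariant in $f$, I would first replace $f$ by $f-f(x_0)$, so that $|f(x)|\le \operatorname{osc}_{B_\rho(x_0)} f$ on the ball while $\nabla f$ and $\nabla^2 f$ are unchanged.

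Next, I would write
\[
\int_{B_\rho(x_0)}|\nabla f|^{p+2}\xi^2\,dx=\int_{B_\rho(x_0)}|\nabla f|^{p}\,\nabla f\cdot \nabla(f-f(x_0))\,\xi^2\,dx,
\]
and integrate by parts (using $\xi\in C_0^1(B_\rho(x_0))$ so there are no boundary terms). This puts the $(f-f(x_0))$ factor outside a divergence of $|\nabla f|^p\nabla f\,\xi^2$. Expanding,
\[
\operatorname{div}\bigl(|\nabla f|^p\nabla f\bigr)=|\nabla f|^p\Delta f+p|\nabla f|^{p-2}\sum_{j,k}f_{x_j}f_{x_k}f_{x_jx_k},
\]
and Cauchy--Schwarz gives $|\Delta f|\le \sqrt{n}\,|\nabla^2 f|$ and $|\sum_{j,k}f_{x_j}f_{x_k}f_{x_jx_k}|\le |\nabla f|^2|\nabla^2 f|$, so that the whole divergence is controlled by $(\sqrt{n}+p)|\nabla f|^p|\nabla^2 f|$. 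The cut-off derivative contributes the additional term $2|\nabla f|^{p+1}\xi|\nabla\xi|$.

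I would then split the resulting two integrands as products and apply Cauchy--Schwarz:
\[
\int|\nabla f|^p|\nabla^2 f|\xi^2\le \Bigl(\int|\nabla f|^{p-2}|\nabla^2 f|^2\xi^2\Bigr)^{1/2}\Bigl(\int|\nabla f|^{p+2}\xi^2\Bigr)^{1/2},
\]
\[
\int|\nabla f|^{p+1}\xi|\nabla\xi|\le \Bigl(\int|\nabla f|^{p+2}\xi^2\Bigr)^{1/2}\Bigl(\int|\nabla f|^p|\nabla\xi|^2\Bigr)^{1/2}.
\]
Denoting the integrals on the right of the conclusion by $B$ and $C$ and the one on the left by $A$, this produces an inequality of the form $\sqrt{A}\le (\operatorname{osc} f)\bigl[(\sqrt{n}+p)\sqrt{B}+2\sqrt{C}\bigr]$. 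Squaring and using $(a+b)^2\le 2(a^2+b^2)$ together with $\sqrt{n}+p\ge 2$ (which holds since $n\ge 1$, $p>1$) gives $A\le 2(\sqrt{n}+p)^2(\operatorname{osc} f)^2(B+C)$, which is exactly the claim.

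The only genuinely delicate point is the constant: getting precisely $(\sqrt{n}+p)^2$ requires the sharp bounds $|\Delta f|\le \sqrt{n}|\nabla^2 f|$ and $|\sum f_{x_j}f_{x_k}f_{x_jx_k}|\le |\nabla f|^2|\nabla^2 f|$ in the divergence computation, and the absorption of the factor $4$ from the cut-off term into $(\sqrt{n}+p)^2$ uses $\sqrt{n}+p\ge 2$. The assumption $|\nabla f|>0$ is used only to justify pointwise computations with $|\nabla f|^{p-2}$ when $p<2$; everything else is standard.
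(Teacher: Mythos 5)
Your proposal is correct and follows essentially the same route as the paper: translate so that $f-f(x_0)$ is bounded by the oscillation, integrate by parts against $\operatorname{div}(|\nabla f|^p\nabla f\,\xi^2)$, bound the divergence by $(\sqrt n+p)|\nabla f|^p|\nabla^2 f|$ plus the cutoff term, and then absorb the left-hand side. The only cosmetic difference is that the paper absorbs pointwise via Young's inequality (peeling off $\tfrac12\int|\nabla f|^{p+2}\xi^2$), while you apply Cauchy--Schwarz at the integral level, cancel $\sqrt A$, and square; both yield the identical constant $2(\sqrt n+p)^2$ and both rely on the same observation $\sqrt n+p\ge 2$ to fold the cutoff contribution into that constant.
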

\begin{proof}  We include a proof for the sake of completeness. Let $v = |\nabla f|^2$. Then
\[
\int_{B_\rho(x_0)} v^{\frac{p+2}{2}} \xi^2 dx = \int_{B_\rho(x_0)} v^{\frac{p}{2}}|\nabla f|^2 \xi^2 dx =  \int_{B_\rho(x_0)} v^{\frac{p}{2}}f_{x_i} [f(x) - f(0)]_{x_i} \xi^2 dx.
\]
Therefore, the integration by parts yields
\[
\begin{split}
&\int_{B_\rho(x_0)} v^{\frac{p+2}{2}} \xi^2 dx = - \int_{B_\rho(x_0)} [f(x) - f(x_0)] \Big[v^{\frac{p}{2}}\Delta f \, \xi^2 
+ pv^{\frac{p-2}{2}}f_{x_i}f_{x_l}f_{x_lx_i} \xi^2 + 2v^{\frac{p}{2}}f_{x_i} \xi\xi_{x_i} \Big]dx\\
& \leq \text{osc}_{B_\rho(x_0)} f  \int_{B_\rho(x_0)} \Big[(\sqrt{n}+p )v^{\frac{p}{2}} |\nabla^2 f|  \xi^2 + 2v^{\frac{p+1}{2}}|\nabla \xi| \xi  \Big] dx\\
& \leq \frac{1}{2}
\int_{B_\rho(x_0)} v^{\frac{p+2}{2}} \xi^2dx 
+ (\sqrt{n} +p)^2\big(\text{osc}_{B_R} f\big)^2 \int_{B_R}\Big[v^{\frac{p-2}{2}} |\nabla^2 f|^2 \xi^2 + v^{\frac{p}{2}} |\nabla \xi|^2 \Big] dx.
\end{split}
\]
The lemma then follows. 
\end{proof}

The next interpolation result is extracted from \cite[page 55]{BDM}.

\begin{lemma}\label{interpolation}
Let  $f\in L^\infty(B_{R})$ with $R>0$. Assume that there exist constants $q>p>0$ and $\gamma>0$ such that
\begin{equation}\label{L^q-average}
 \|f\|_{L^\infty(B_{(1-\sigma)r})}\leq \frac{\gamma}{(\sigma r)^{\frac{n}{q}}} \Big( \int_{B_r} |f|^q \, dx\Big)^{\frac{1}{q}}
\end{equation}
for every $r\in (0, R)$ and every $\sigma \in (0,1)$. Then we have
\begin{equation*}
 \|f\|_{L^\infty(B_{\frac{R}{2}})}\leq \frac{\gamma'}{R^{\frac{n}{p}}} \Big( \int_{B_{R}} |f|^p \, dx\Big)^{\frac{1}{p}},
\end{equation*}
where $\gamma' =  \frac{p }{q-p} 2^{\frac{n}{p}} \Big(  2^{\frac{n}{p} +1} \frac{q-p}{q} \gamma \Big)^{\frac{q}{p}}$. In particular, $\gamma' = 8^{\frac{n}{p}} \gamma^2$ if $q=2p$.
\end{lemma}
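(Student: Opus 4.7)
The plan is a routine Moser-style iteration: convert the hypothesis from $L^q$ to $L^p$ by Lebesgue interpolation, use Young's inequality to absorb a sup-norm remainder, and then sum a geometric series over a carefully chosen sequence of radii.

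First, I would invoke the trivial bound $\int_{B_r}|f|^q\,dx \leq \|f\|_{L^\infty(B_r)}^{q-p}\int_{B_R}|f|^p\,dx$ (using monotonicity in $r\leq R$) to rewrite hypothesis \eqref{L^q-average} as
\[
F\bigl((1-\sigma)r\bigr) \leq \frac{\gamma}{(\sigma r)^{n/q}}\,F(r)^{(q-p)/q}\,J^{1/q},
\]
where $F(\rho):=\|f\|_{L^\infty(B_\rho)}$ and $J:=\int_{B_R}|f|^p\,dx$. Next, I would apply Young's inequality with the conjugate exponents $q/(q-p)$ and $q/p$ to separate $F(r)^{(q-p)/q}$ from the rest. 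Taking the splitting parameter equal to $\frac{q}{q-p}\,2^{-n/p-1}$ produces
\[
F\bigl((1-\sigma)r\bigr) \leq 2^{-n/p-1}\,F(r) + C_\star\,(\sigma r)^{-n/p}\,J^{1/p}
\]
with a fully explicit constant $C_\star=C_\star(n,p,q,\gamma)$; the precise value is chosen so that, after the radii-dependent blow-up $2^{n/p}$ arising in the iteration below is absorbed, the per-step contraction ratio equals exactly $1/2$.

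Then I would iterate on the geometric sequence $\rho_i:=R-R\cdot 2^{-(i+1)}$, which satisfies $\rho_0=R/2$, $\rho_i\nearrow R$ and $\rho_{i+1}-\rho_i=R\cdot 2^{-(i+2)}$. Applying the previous inequality with $r=\rho_{i+1}$ and $(1-\sigma)r=\rho_i$ gives
\[
F(\rho_i) \leq 2^{-n/p-1}\,F(\rho_{i+1}) + C_\star\,R^{-n/p}\,2^{(i+2)n/p}\,J^{1/p}.
\]
Unwinding $N$ times, the resulting geometric series has ratio $2^{-n/p-1}\cdot 2^{n/p}=1/2$ and therefore converges, while the head term $(2^{-n/p-1})^N F(\rho_N)$ vanishes as $N\to\infty$ thanks to the standing assumption $f\in L^\infty(B_R)$ (this is the only use of that assumption, and $\|f\|_{L^\infty(B_R)}$ does not enter the final estimate). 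Summing the series yields the desired bound on $F(R/2)$.

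The only real obstacle is the bookkeeping needed to recover the precise constant $\gamma'=\frac{p}{q-p}2^{n/p}(2^{n/p+1}\frac{q-p}{q}\gamma)^{q/p}$ stated in the lemma, which amounts to tracking the powers of $2$, $\gamma$ and $\frac{q-p}{q}$ produced by the Young splitting and the geometric summation. As a sanity check, specializing to $q=2p$ the computation collapses to $\gamma'=8^{n/p}\gamma^2$, matching the explicit claim.
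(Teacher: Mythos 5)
Your proposal is correct and follows essentially the same route as the paper: the same reduction $\int_{B_r}|f|^q \le \|f\|_{L^\infty(B_r)}^{q-p}\int_{B_R}|f|^p$, the same Young split with the coefficient $2^{-n/p-1}$ in front of the sup-norm (the paper's $\delta$), and the same geometric sequence of radii (your $\rho_i = R-R2^{-(i+1)}$ is exactly the paper's $r_s = \tfrac{R}{2}\sum_{j=0}^{s}2^{-j}$), with the head term killed by $f\in L^\infty(B_R)$ and the tail summed as a geometric series of ratio $1/2$. The only thing you leave undone is carrying through the constant bookkeeping to the stated $\gamma'$, but you correctly identify that this is pure arithmetic and verify it at $q=2p$.
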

\begin{proof}
The proof of this lemma  for particular $q=p+2$ is in \cite[page 55]{BDM}.
For the sake of completeness, we include the same arguments for all $q>p$ here.

Let $G = ( \int_{B_{R}} |f|^p \, dx)^{\frac{1}{p}}$, and for  $s=0, 1,\dots $, 
 \[
  r_s = \frac{R}{2} \sum_{i=0}^{s} 2^{-i}, \quad   F_s = \|f\|_{L^\infty(B_{r_s})}.
 \]
Then by applying \eqref{L^q-average} to $r = r_{s+1}$ and $\sigma r = r_{s+1} - r_s = R/ 2^{s+2}$, we obtain 
that
\[
 F_s \leq  2^{\frac{n s}{q}} \Big(\frac{4}{R}\Big)^{\frac{n}{q}} \gamma \Big( \int_{B_{r_{s+1}}} |f|^q \, dx\Big)^{\frac{1}{q}}  \leq 2^{\frac{n s}{q}} \Big(\frac{4}{R}\Big)^{\frac{n}{q}} \gamma F_{s+1}^{\frac{q-p}{q}} G^{\frac{p}{q}}.
\]
Using Young's inequality, it follows for any $\delta>0$ that
\begin{equation}\label{iteration}
 F_s \leq \delta F_{s+1} +  2^{\frac{n s}{p}} \Theta   G\quad\mbox{for}\quad s=0,1,...
\end{equation}
with
$
\Theta=\frac{p}{q}
  \Big(\frac{q-p}{\delta q}\Big)^{\frac{q-p}{q}} \Big(\frac{4}{R}\Big)^{\frac{n}{p}} \gamma^{\frac{q}{p}}$.
Thus by iterating the relation \eqref{iteration}, we get
\begin{align*}
 F_0 \leq \delta^s F_s + \Theta G \sum_{i=0}^{s-1} \big(\delta  2^{\frac{n}{p}})^i 
 \leq \delta^s \|f\|_{L^\infty(B_R)} + \Theta G \sum_{i=0}^{s-1} \big(\delta  2^{\frac{n}{p}})^i
\end{align*}
for any $s=1,2,...$ Then by choosing $\delta = 2^{-(\frac{n}{p} +1)}$ and letting $s\to \infty$, we deduce that
\[
 F_0\leq 2 \Theta G =  \frac{2 p}{ q } \Big( 2^{\frac{n}{p} +1} \frac{q-p}{q} \Big)^{\frac{q-p}{q}} \big(\frac{4}{R}\big)^{\frac{n}{p}} \gamma^{\frac{q}{p}}G
 =\frac{\gamma'}{R^{\frac{n}{p}}} G.
\]
This completes the proof as $F_0 = \|f\|_{L^\infty(B_{\frac{R}{2}})}$.
\end{proof}

\section{Proofs of main theorems}\label{sec:proof-main-theorem}

We start with proving Theorem~ \ref{Lip}. Our proof consists of two main steps, and the crucial one is given in the following proposition.

\begin{proposition} \label{prop:Lip} Assume that \textup{(H2)--(H5)} hold.
Let $u$ be a  weak solution of \eqref{ref-eqn} that satisfies 
\beq\label{inter-rough-ass}
\int_{B_\frac{5}{2}} |\nabla u|^{2(p+\bar q )}\, dx\leq \bar M\quad \mbox{for some } \bar q >\max{\{1, \frac{n}{2}\}}.
\eeq
Then there exists $C>0$ depending only on $n$, $p$,  $\bar q$, $\gamma_0$, $\gamma_1$,  and $\bar M$ such that
inequality \eqref{Win} holds true.
\end{proposition}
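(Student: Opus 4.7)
My plan for Proposition~\ref{prop:Lip} is to combine a De Giorgi type iteration for $v:=|\nabla u|^p$, extracted from Lemma~\ref{DiGiorgi-Class}, with the interpolation Lemma~\ref{interpolation}. The argument proceeds in two steps: first establish an $L^\infty$--$L^s$ bound on $v$ over nested balls with the correct scaling in radius; second, apply Lemma~\ref{interpolation} to lower the outer $L^s$ norm to the $L^p$ norm of $|\nabla u|$, thereby recovering \eqref{Win}.

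\emph{Step 1: De Giorgi iteration yielding an $L^\infty$--$L^s$ bound.} Fix a ball $B_\rho(y)\subset B_{5/2}$, a level $k\ge 1$, and a cut-off $\xi$ at nested radii $\rho/2\le\rho_1<\rho_2\le\rho$. Starting from Lemma~\ref{DiGiorgi-Class}, the two tail terms on the right are handled as follows. The term $w^p\chi_{\{v>k\}} = v^2\chi_{\{v>k\}}$ splits as $2[(v-k)^+]^2 + 2k^2\chi_{\{v>k\}}$; the quadratic piece is absorbed via Sobolev and H\"older once $|A_{k,\rho_2}|$ is small, while the remainder contributes $k^2|A_{k,\rho_2}|$. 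For the genuinely new term $w^{p+1}\chi_{\{v>k\}} = |\nabla u|^{2(p+1)}\chi_{\{v>k\}}$, the a priori bound \eqref{inter-rough-ass} enters decisively: since $2(p+1)<2(p+\bar q)$ (using $\bar q>1$), H\"older's inequality gives
\[
\int |\nabla u|^{2(p+1)}\chi_{\{v>k\}}\xi^2\,dx \le \bar M^{(p+1)/(p+\bar q)}\,|A_{k,\rho_2}|^{(\bar q-1)/(p+\bar q)},
\]
where $A_{k,r}:=\{v>k\}\cap B_r(y)$. Combining these controls with the Sobolev inequality applied to $(v-k)^+\xi$ yields an inequality of De Giorgi form with tail exponent $\theta>0$ in $|A_{k,\rho_2}|$. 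A classical iteration with levels $k_j=k_0+K(1-2^{-j})$ and radii $\rho_j=\rho/2+\rho/2^{j+1}$ then shows that $|A_{k_0+K,\rho/2}|=0$ once $K$ is chosen large enough in terms of $\int_{B_\rho(y)}v^s\,dx$ and $\bar M$. Carefully tracking constants produces
\[
\|v\|_{L^\infty(B_{(1-\sigma)\rho}(y))} \le \frac{C(\bar M)}{(\sigma\rho)^{n/s}}\Big(\int_{B_\rho(y)}v^s\,dx\Big)^{1/s}
\]
for some $s\ge 2$, every $\sigma\in(0,1)$, and every $B_\rho(y)\subset B_{5/2}$.

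\emph{Step 2: interpolation.} Rewriting the previous display with $f=|\nabla u|$ (so that $v^s=|f|^{ps}$) is exactly the hypothesis \eqref{L^q-average} of Lemma~\ref{interpolation} with exponents $p$ and $q=ps>p$. Since $B_R(x_0)\subset B_{5/2}$ for $x_0\in B_1$ and $0<R\le 1$, Lemma~\ref{interpolation} applies on $B_R(x_0)$ and lowers the outer exponent down to $p$, producing precisely \eqref{Win}.

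\emph{Main obstacle.} The delicate work is concentrated in Step~1: one must verify that the tail exponent $\theta$ produced by combining H\"older (on the $w^{p+1}$ term) with Sobolev embedding is positive and large enough for the iteration to contract. The two-fold requirement $\bar q>\max\{1,n/2\}$ is calibrated exactly for this---$\bar q>1$ yielding a positive H\"older gain $(\bar q-1)/(p+\bar q)$, and $\bar q>n/2$ providing the integrability needed to dominate the Sobolev threshold in the iteration. Once Step~1 is secured, Step~2 is a direct application of Lemma~\ref{interpolation}.
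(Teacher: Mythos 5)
There is a genuine gap in Step~1, concentrated in the treatment of the term $\int w^{p+1}\chi_{\{v>k\}}\xi^2\,dx$. Your proposal applies H\"older to peel off the full factor $|\nabla u|^{2(p+1)}$ against the a~priori bound~\eqref{inter-rough-ass}, producing the tail term
\[
\bar M^{\frac{p+1}{p+\bar q}}\,|A_{k,\rho_2}|^{\frac{\bar q-1}{p+\bar q}}.
\]
After the Sobolev embedding this term feeds into the De Giorgi recursion as $|A_{k_{i+1},\rho_i}|^{\frac{2}{n}+\frac{\bar q-1}{p+\bar q}}$, and via Chebyshev ($|A_{k_{i+1},\rho_i}|\lesssim 4^{i}K^{-2}J_i$) one gets the exponent $J_i^{\frac{2}{n}+\frac{\bar q-1}{p+\bar q}}$. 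For the iteration to contract one needs $\frac{2}{n}+\frac{\bar q-1}{p+\bar q}>1$, i.e.\ $\bar q>\frac{(n-2)p+n}{2}$, which is \emph{not} implied by the hypothesis $\bar q>\max\{1,\frac{n}{2}\}$; in fact for $n\ge 3$ and any $p>1$ the condition $\bar q>\frac{(n-2)p+n}{2}$ is strictly stronger, so your iteration has no positive gain $\kappa$ and never closes.

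The fix in the paper is a sharper H\"older split: rather than separating $w^{p+1}$ from the indicator of $A_k$, write $w^{p+1}=w^{\frac{p}{q}+1}\cdot w^{\frac{p}{q'}}$ and apply H\"older with exponents $(q,q')$, which yields
\[
\int_{A_{k_{i+1},\rho_i}}w^{p+1}\,dx \le \Big(\int_{A_{k_{i+1},\rho_i}}|\nabla u|^{2(p+q)}\,dx\Big)^{\frac{1}{q}}\Big(\int_{A_{k_{i+1},\rho_i}}w^p\,dx\Big)^{\frac{1}{q'}}.
\]
The first factor is controlled by $M_q(R)$ (and hence by $\bar M$ when $q=\bar q$), while the second factor $\big(\int_{A}v^2\big)^{1/q'}$ retains a power of $J_i$ and of $K^2|A|$. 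This produces the recursion $J_{i+1}\le C M_q(R)(\sigma R)^{-2}K^{-4/n}B^iJ_i^{1+\kappa}$ with $\kappa=\frac{2}{n}-\frac{1}{q}>0$ precisely because $q>\frac{n}{2}$, independent of $p$. A second, related issue is that even granting a convergent iteration, directly ``tracking constants'' as you describe would not yield the scale-correct $L^\infty$--$L^s$ bound feeding into Lemma~\ref{interpolation}: the exponent on $\int v^2$ coming out of the iteration is $\kappa$, which is not $\frac12$ unless $q=\infty$. The paper therefore runs the De Giorgi argument twice: first with $q=\bar q$ to establish finiteness of $\|\nabla u\|_{L^\infty(B_1)}$, then with $q=\infty$ (and $M_\infty$ now a finite constant) to obtain the homogeneous $L^\infty$--$L^{2p}$ bound, after which Lemma~\ref{interpolation} with $q=2p$ gives \eqref{Win}. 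Your Step~2 is fine in outline, but it requires the two-pass structure to be available.
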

\begin{proof}
The proof uses Lemma~\ref{DiGiorgi-Class} and De Giorgi's iteration. We provide full calculations here.
Without loss of generality, we assume $x_0=0$.

  Let $v= w^{p/2} = |\nabla u|^p$. For each $k> 0$ and $r >0$, denote
\[ 
 A_{k,r} = \{x \in B_r: v(x) > k\}.
\]

Let $K$ be a positive number which will be determined. 
Let  $\zeta(s)$ be a smooth cut-off function on $\R$ which equals  unity for $s \leq 0$, vanishes for $s\ge \frac{1}{2}$, and 
$|\zeta'| \leq c$ for some constant $c>0$.

Let us fix $R\in (0,3/2]$ and $\sigma\in (0,1)$. Then for $i = 0, 1, 2,\dots$, we denote
\[
\rho_i =\Big (1 -\sigma + \frac{\sigma}{2^{i}}\Big)R, \quad \bar{\rho}_{i} = \frac{\rho_i + \rho_{i+1}}{2},\quad  
\xi_i (y)= \zeta\left(\frac{2^{i+1}}{\sigma R}(|y|-\rho_{i+1})   \right),
\]
\[
k_i = K\Big(1 - \frac{1}{2^i}\Big), \quad
v_i  = (v -k_i)^+ .\]
Then $ \rho_{i+1} < \bar{\rho}_i < \rho_i$, the function $\xi_i$ vanishes outside $B_{\bar{\rho}_i}$, equals unity on $B_{\rho_{i+1}}$, and
\begin{equation} \label{test-grad.est}
0\le \xi_i\le 1,\quad |\nabla \xi_i| \leq \frac{c\,2^{i+1}}{\sigma R}\quad \text{on }  B_3.
\end{equation}
Let $n/2 <q \leq \infty$. By applying Lemma~\ref{DiGiorgi-Class} with $k=k_{i+1}>0$, $\xi=\xi_i$ and by using \eqref{test-grad.est} together with H\"older's inequality,  we obtain 
\begin{align}\label{useM_0}
 \int_{B_3} &|\nabla (v_{i+1} \xi_i)|^2  dx 
 \leq C \int_{B_3} v_{i+1}^2|\nabla\xi_i|^2 dx
+C \int_{B_3} (w^p + w^{p+1})\chi_{v >k_{i+1}}(x)\xi_i^2 dx\\
& \leq C \Big[\frac{4^i}{(\sigma R)^2}\int_{A_{k_{i+1}, \rho_i }} v_{i+1}^2 dx
+ \int_{A_{k_{i+1}, \rho_i}} w^p  dx +\int_{A_{k_{i+1}, \rho_i}}  w^{\frac{p}{q} +1} w^{\frac{p}{q'}} dx\Big]\nonumber\\
& \leq C \left\{\Big(\int_{A_{k_{i+1}, \rho_i }} |\nabla u|^{2p} dx\Big)^{\frac{1}{q}}
\Big[ \frac{4^i}{(\sigma R)^2} \Big(\int_{A_{k_{i+1}, \rho_i }} v_{i+1}^2 dx\Big)^{\frac{1}{q'}}
+ \Big(\int_{A_{k_{i+1}, \rho_i}} w^p  dx\Big)^{\frac{1}{q'}}\Big]\right.\nonumber\\
&\quad \left. +\Big(\int_{A_{k_{i+1}, \rho_i}} |\nabla u|^{2(p+q) } dx\Big)^{\frac{1}{q}} 
\Big(\int_{A_{k_{i+1}, \rho_i}} w^p  dx\Big)^{\frac{1}{q'}} \right\}.\nonumber
\end{align}
Let  $M_q(R) = \||\nabla u|^{\frac{2p}{q}}\|_{L^q(B_R)}   + \||\nabla u|^{\frac{2p}{q} +2} \|_{L^q(B_R)}$ with the convention that $M_\infty(R) = 1 + \|\nabla u\|_{L^\infty(B_R)}^2$. Then it follows from \eqref{useM_0} that
\begin{align}\label{ineq.Di}
\int_{B_3} &|\nabla (v_{i+1} \xi_i)|^2  dx 
   \leq C M_q(R)  \left\{ \frac{4^i}{(\sigma R)^2} \Big(\int_{A_{k_{i+1}, \rho_i }} v_{i+1}^2 dx\Big)^{\frac{1}{q'}}
+ \Big(\int_{A_{k_{i+1}, \rho_i}} v^2  dx\Big)^{\frac{1}{q'}}
 \right\}\\
 & \leq C M_q(R) \left\{ \frac{4^i}{(\sigma R)^2} \Big(\int_{A_{k_{i+1}, \rho_i }} v_{i+1}^2 dx\Big)^{\frac{1}{q'}}
+ \Big(\int_{A_{k_{i+1}, \rho_i}} v_{i+1}^2  dx + K^2 |A_{k_{i+1}, \rho_i}|\Big)^{\frac{1}{q'}}
 \right\}\nonumber\\
 & \leq C M_q(R) \left\{ \frac{4^i}{(\sigma R)^2} \Big(\int_{A_{k_{i+1}, \rho_i }} v_{i+1}^2 dx\Big)^{\frac{1}{q'}}
+ \Big( K^2 |A_{k_{i+1}, \rho_i}|\Big)^{\frac{1}{q'}}
 \right\},\nonumber
\end{align}
where  $C$ depends only on $n$, $p$, $\gamma_0$, $\gamma_1$.

We next show that \eqref{ineq.Di} implies the desired estimate \eqref{Win}. For this, let us define
\[
  J_i = \int_{A_{k_i}, \rho_i}v_i^2 \,dx.
\]
By properties of $\xi_i$,  Sobolev's embedding  $W^{1,\frac{2n}{n+2}}(B_3)\hookrightarrow L^2(B_3)$ when $n\geq 2$, and H\"older's inequality, we have
\begin{equation*} 
\begin{split}
J_{i+1} & \leq \int_{B_3}  (v _{i+1}\xi_i)^2  dx\leq C \Big(\int_{B_3} |\nabla  (v _{i+1}\xi_i)|^{\frac{2n}{n+2}}dx\Big)^{\frac{n+2}{n}}
\leq   C |A_{k_{i+1}, \rho_{i}}|^{\frac2n} \int_{B_3} |\nabla  (v _{i+1}\xi_i)|^2dx.
\end{split}
\end{equation*}
We note that this estimate for $J_{i+1}$ still holds true when $n=1$. Indeed, in that case we can use the Sobolev's embedding  $W^{n,1}(B_3)\hookrightarrow C(\overline{B_3})$ and H\"older's inequality
to obtain
\begin{equation*} 
\begin{split}
J_{i+1} & \leq \int_{B_3}  (v _{i+1}\xi_i)^2  dx\leq  |A_{k_{i+1}, \rho_{i}}|\, \|v _{i+1}\xi_i\|_{L^\infty(B_3)}^2
\leq C |A_{k_{i+1}, \rho_{i}}|  \Big(\int_{B_3} |\nabla  (v _{i+1}\xi_i)| dx\Big)^2\\
&\leq   C |A_{k_{i+1}, \rho_{i}}|^{2} \int_{B_3} |\nabla  (v _{i+1}\xi_i)|^2dx.
\end{split}
\end{equation*}
It follows from the estimate for $J_{i+1}$,  \eqref{ineq.Di} and the fact $\int_{A_{k_{i+1}, \rho_i }} v_{i+1}^2 dx \leq 
\int_{A_{k_{i+1}, \rho_i }} v_i^2 dx \leq J_i$ that
\begin{equation} \label{A.est}
\begin{split}
J_{i+1} &\leq  C M_q(R) |A_{k_{i+1}, \rho_{i}}|^{\frac2n} 
 \Big [ \frac{4^i}{(\sigma R)^2} J_i^{\frac{1}{q'}}
+ \big( K^2 |A_{k_{i+1}, \rho_i}|\big)^{\frac{1}{q'}} \Big].
\end{split}
\end{equation}
The monotonicity of $k_i$ implies that
\begin{equation*} 
J_i \geq \int_{A_{k_{i+1}, \rho_i}}(v - k_{i})^2 dx
\geq (k_{i+1} - k_i)^2|A_{k_{i+1}, \rho_i}| = 4^{-(i+1)}K^2|A_{k_{i+1}, \rho_i}|,
\end{equation*}
which gives
\beq\label{below.A}
|A_{k_{i+1}, \rho_i}|\le 4^{i+1}K^{-2} J_i.
\eeq

From  \eqref{A.est} and \eqref{below.A}, we deduce that 
\begin{equation} \label{iter-for} 
J_{i+1} \leq  C M_q(R) |A_{k_{i+1}, \rho_{i}}|^{\frac2n} 
 \frac{4^i}{(\sigma R)^2} J_i^{\frac{1}{q'}} \leq  C M_q(R) (\sigma R)^{-2} K^{\frac{-4}{n}} B^i J_{i}^{1+\kappa}, 
\end{equation}
where   $B = 4^{\frac{2}{n} +1}$ and $\kappa = \frac{2}{n}-{\frac{1}{q}}$. Note that as $q>n/2$, we have $\kappa>0$.

By iterating formula \eqref{iter-for}, we see that
\[
J_i \leq \big[ C M_q(R)(\sigma R)^{-2} K^{\frac{-4}{n}}  \big]^{\frac{(1+\kappa)^i -1}{\kappa}}B^{\frac{(1+\kappa)^i -1}{\kappa^2} -
\frac{i}{\kappa}} J_0^{(1+\kappa)^i} \quad \mbox{for all } i =0, 1,\dots
\] 
Next, select
\[
K = \big[ C M_q(R) (\sigma R)^{-2} \big]^{\frac{n}{4}}  B^{\frac{n}{4\kappa}}\left [ \int_{B_R} |\nabla u|^{2p}dx \right]^{\frac{n\kappa}{4}}
\]
which ensures
\begin{equation*}
J_0 =\int_{A_{k_0,\rho_0}} |\nabla u|^{2p} dx \leq \int_{B_R} |\nabla u|^{2p} dx  
=  \big[ C M_q(R) (\sigma R)^{-2} K^{\frac{-4}{n}}  \big ]^{-\frac{1}{\kappa}} B^{-\frac{1}{\kappa^2}}=:\Lambda.
\end{equation*}
Therefore, we obtain
\[
J_{i} \leq \Lambda B^{-\frac{i}{\kappa}} \rightarrow 0, \quad \text{as} \quad 
i \rightarrow \infty.
\]
Hence, we conclude that
\[
 |\nabla u (x)|^{p} =v(x)\leq K\quad  \text{a.e. in } B_{(1-\sigma)R}.
 \]
Thus we have proved that
\beq\label{initial-bound}
|\nabla u(x)| \leq   \frac{1}{(\sigma R)^{\frac{n}{2p}}}
\left[ C B^{\frac{1}{\kappa}} M_q(R) \Big(\int_{B_R} |\nabla u|^{2p} dx\Big)^\kappa \right]^{\frac{n}{4p}}\quad 
\text{a.e. in } B_{(1-\sigma)R}
\eeq
for every $R\in (0, 3/2]$, $\sigma\in (0,1)$ and $2/n<q\leq \infty$.
 By taking  $R=3/2$, $\sigma= 1/3$, $q=\bar q$ and using assumption \eqref{inter-rough-ass}, we see that
the right hand side of  \eqref{initial-bound} is bounded. As a consequence,
  there exists a constant 
 $C_*$ depending only on $n$, $p$, $\bar q$, $\gamma_0$, $\gamma_1$ and $\bar M$ such that
 \begin{equation}\label{eq:infinity-est}
 \|\nabla u\|_{L^\infty(B_1)}\leq C_*.
 \end{equation}
Next,  we infer from  \eqref{initial-bound} with $q=\infty$, the fact $\kappa =\frac2n -\frac1q$ and \eqref{eq:infinity-est} that
\begin{align*}
\|\nabla u\|_{ L^\infty(B_{(1-\sigma)R})}
\leq \frac{\big[ C B^{\frac{n}{2}} M_\infty(1)\big]^{\frac{n}{4p}}}{(\sigma R)^{\frac{n}{2p}}}  
\Big(\int_{B_R} |\nabla u|^{2p} dx\Big)^{\frac{1}{2p}}\leq  \frac{\gamma}{(\sigma R)^{\frac{n}{2p}}}
\Big(\int_{B_R} |\nabla u|^{2p} dx\Big)^{\frac{1}{2p}}
\end{align*}
for every $R\in (0,1]$ and 
every $\sigma\in (0,1)$, where $\gamma =\big[ C B^{\frac{n}{2}} (1+ C_*^2)\big]^{\frac{n}{4p}}$.
Hence,  we can use
 the interpolation result in Lemma~\ref{interpolation}  for $q=2p$  to get 
\begin{equation*}
 \|\nabla u\|_{L^\infty(B_{\frac{R}{2}})}\leq \frac{8^{\frac{n}{p}} \gamma^2}{R^{\frac{n}{p}}} \Big( \int_{B_{R}} |\nabla u|^p \, dx
 \Big)^{\frac{1}{p}}\quad \mbox{for all } 0<R\leq 1.
\end{equation*}
Therefore, the proof is complete.
\end{proof}

We are now ready to prove our main results.

\begin{proof} [\textbf{Proof of Theorem \ref{Lip}}] 

Thanks to Proposition~\ref{prop:Lip}, it remains to verify condition \eqref{inter-rough-ass}. We  complete this step by claiming that  
for any positive integer $m$  there exists a constant $M>0$ depending only on $n$, $p$, $m$,  $\gamma_0$, $\gamma_1$ and $M_0$ such that
\begin{equation}\label{L^q-est}
\int_{B_{\frac52}} |\nabla u|^{p+2m} \, dx\leq M. 
\end{equation}
Indeed, let us fix  $m\in \{1,2,\dots\}$. As in the proof of Lemma~\ref{test-lemma}, we can assume 
that $u\in C^2(B_3)$ with $|\nabla u|>0$. Let $x_0\in B_{\frac52}$ and $0<\rho\leq 1/8$ be arbitrary,  which ensure 
that $B_{2\rho}(x_0) \subset B_{\frac{11}{4}}$.  Consider 
  $s\geq 0$ and   a nonnegative function  $\xi \in C^\infty_0(B_\rho(x_0))$. 
Then by applying Lemma \ref{test-lemma}
with $\beta(w) = (w +\delta)^s$ and letting $\delta \rightarrow 0^+$, we obtain
\begin{align*} 
I &\eqdef \int_{B_3} w^{\frac{p-2 +2s}{2}} |\nabla^2 u|^2  \xi^2 dx  
+s \int_{B_3} w^{\frac{p-4+2 s}{2}}|\nabla w|^2\xi^2 dx\\
&\leq  C(s+1) \int_{B_3}\big(w^{\frac{p+2s}{2}} + w^{\frac{p+2+2s}{2}} \big)\xi^2 dx\nonumber\\
&+C \int_{B_3} w^{\frac{p-1+2s}{2}}|\nabla^2 u||\nabla \xi| \xi dx + \int_{B_3} \big( w^{\frac{p+2s}{2}} +w^{\frac{p+1+2s}{2}} 
\big)|\nabla \xi| \xi dx
\nonumber
\end{align*}
where we have used $|\nabla w| \leq C w^{\frac12} |\nabla^2 u|$. It follows from Young's inequality and by moving some terms around that
\begin{align} \label{Lq-1}
I  &\leq C(s+1) \int_{B_3}\big(w^{\frac{p+2s}{2}} + w^{\frac{p+2+2s}{2}} \big)\xi^2 dx + 
C\int_{B_3} w^{\frac{p+2s}{2}} |\nabla \xi|^2 dx
\end{align}
with $C$  depending only on $n$, $\gamma_0$ and $\gamma_1$. 
Next, applying Lemma~\ref{embed-iq} for $f=u$ and  with test function $w^{s/2}\xi$, we get 
\begin{equation} \label{Lq-2}
\int_{B_3} w^{\frac{p+2 +2s}{2}}\xi^2 dx \leq 
4( \sqrt{n} +p)^2 \big(\text{osc}_{B_\rho(x_0)} u \big)^2\Big [ (s+1) I + \int_{B_3}w^{\frac{p+2s}{2}} |\nabla \xi|^2 dx \Big].
\end{equation}
Owing  to assumption \eqref{rough-ass} and the  fact $B_\rho(x_0)\subset B_{\frac{21}{8}}$, we can infer from Theorem~\ref{thm:Holder} that 
$\text{osc}_{B_\rho(x_0)} u\leq C_0 \rho^\alpha$. Thus we deduce  from  \eqref{Lq-2} and  \eqref{Lq-1} that
\begin{equation} \label{Lq-3}
\int_{B_3} w^{\frac{p+2 + 2s}{2}} \xi^2 dx \leq \gamma\, \rho^{2\alpha}(s+1)^2 \Big[ \int_{B_3} w^{\frac{p+2+2s}{2}} \xi^2 dx + \int_{B_3} w^{\frac{p+2s}{2}} \big(\xi^2 +|\nabla \xi|^2 \big) dx \Big],
\end{equation}
where $\gamma$ and $\alpha$ depend only on $n$, $p$, $\gamma_0$, $\gamma_1$ and $M_0$.
Now let $R_0 =  \min\{(2\gamma)^{-\frac{1}{2\alpha}}, \frac18\}$, and 
\[
R_s =  R_0 (1+s)^{-\frac{1}{\alpha}}\quad \mbox{for } s\geq 0.
\]
Let $\xi_s\in C_0^\infty(B_{R_s}(x_0))$ be the standard cut-off function which equals one in $B_{R_{s+1}}(x_0)$,  and
\[
|\nabla \xi_s| \leq \frac{2}{R_s -R_{s+1}}.
\] 
Then by using this test function    in \eqref{Lq-3}, we obtain
\[
\int_{B_{R_s}(x_0)} w^{\frac{p+2 + 2s}{2}} \xi_s^2 dx \leq \frac{1}{2} \int_{B_{R_s(x_0)}} w^{\frac{p+2+2s}{2}} \xi_s^2 dx + \frac{1}{2} 
\int_{B_{R_s(x_0)}} w^{\frac{p+2s}{2}} \big(\xi_s^2 +|\nabla \xi_s|^2 \big) dx
\]
yielding
\[
\int_{B_{R_{s+1}(x_0)}} |\nabla u|^{p + 2s+2}  dx 
 \leq \frac{5}{(R_s - R_{s+1})^2} \int_{B_{R_s}(x_0)} |\nabla u|^{p+2s}dx\quad \forall s\geq 0.
\]
 By iterating this inequality from $s=0$  to $s=m-1$ and using the fact $R_{i-1}- R_i \geq (1+i)^{-\frac{2}{\alpha}}$, we conclude that
\begin{equation*}
\int_{B_{R_m(x_0)}} |\nabla u|^{p+2m} dx \leq \frac{5^m}{\Pi_{i=1}^m (R_{i-1} -R_{i})^2}
\int_{B_{R_0}(x_0)} |\nabla u|^{p}  dx\leq 5^m [(m+1)!]^{\frac{2}{\alpha}} \int_{B_{R_0}(x_0)} |\nabla u|^{p}  dx.
\end{equation*}
As $B_{2 R_0}(x_0)\subset B_{\frac{11}{4}}$, we can use  Lemma~\ref{Lp-est} together with  assumption \eqref{rough-ass}   
 to bound the above right-hand side. Consequently, we obtain 
\begin{equation}\label{L^m-R_m}
\int_{B_{R_m(x_0)}} |\nabla u|^{p+2m} dx \leq C(n, p, m, \gamma_0, \gamma_1, M_0)\quad \mbox{for all } x_0\in B_{\frac52}.
\end{equation}
Now by covering $B_{\frac52}$ with a finite number of balls $B_{R_m}(x_i)$ with $x_i\in B_{\frac52}$, we deduce 
claim \eqref{L^q-est} from  \eqref{L^m-R_m}. The proof is therefore complete.
\end{proof}

\begin{proof} [\textbf{Proof of Theorem \ref{Lip-plus}}] 
The proof  is a direct consequence of that of Proposition~\ref{prop:Lip}. Observe that 
in the proof of Proposition~\ref{prop:Lip},  assumption \eqref{inter-rough-ass} is only used to control the term 
$w^{p+1}$ in \eqref{useM_0} which comes from  Lemma~\ref{DiGiorgi-Class}.
Thus by  using 
\eqref{strengthened-est} in place of Lemma~\ref{DiGiorgi-Class}, 
we see that \eqref{ineq.Di} holds for  $q=\infty$ and with $M_\infty(R)$ being replaced by $1$. Therefore, estimate \eqref{iter-for} is valid without the term $M_q(R)$ and for  $\kappa = 2/n$.
With this change and  by repeating the  arguments after \eqref{iter-for}, we obtain \eqref{Win}. Note also that assumption  (H1) is 
not needed  since Lemma~\ref{Lp-est}, Theorem~\ref{thm:Holder} and Lemma~\ref{embed-iq} are not used in the proof.
\end{proof}


\textbf{Acknowledgement.} L.H. gratefully acknowledges the support provided by  NSF grant DMS-1412796. T.N. gratefully acknowledges 
the support by a grant  from the Simons Foundation (\# 318995).  This work is completed while the second author is visiting Tan Tao university and he would like to thank the
institution for the kind hospitality.

\end{document}